\theoremstyle{definition}
\newtheorem{definition}{Definition}
\newtheorem{theorem}[definition]{Theorem}
\newtheorem{lemma}[definition]{Lemma}
\newtheorem{corollary}[definition]{Corollary}
\newtheorem{proposition}[definition]{Proposition}
\newtheorem{remark}[definition]{Remark}
\def\cG{{\mathcal{G}}}
\def\cX{{\mathcal{X}}}
\def\DS{\displaystyle}
\def\eps{{\varepsilon}}
\def\id{{\mathrm{id}}}
\def\Prob{{\mathbb{P}}}
\def\EXP{{\mathbb{E}}}
\def\fS{{\mathbb{S}}}
\def\T{\mathbb{T}}
\def\tq{{\widetilde{q}}}
\def\tX{{\tilde X}}
\def\p{\mathfrak{p}}
\def\q{\mathfrak{q}}
\def\mr{\eta}
\def\wT{\widetilde{T}}
\def\Var{{\mathrm{Var}}}
\def\hphi{{\hat\varphi}}
\def\Leb{{\mathrm{Leb}}}
\title[The central limit theorem and rate of mixing...]
{The central limit theorem and rate of mixing for simple random walks on the circle}
\author{Klaudiusz Czudek}
\address{Klaudiusz Czudek, Institute of Science and Technology Austria (ISTA), Am Campus 1, 3400 Klosterneuburg}
\email{klaudiusz.czudek@gmail.com}
\author{Dmitry Dolgopyat}
\address{Dmitry Dolgopyat, University of Maryland,  Institute for Physical Science and Technology,
Atlantic Building College Park, MD 20742}
\email{dimadolgopyat@gmail.com}
\subjclass[2000]{ Primary 37A25 60K37, secondary 37A50, 60G50}
\keywords{ Random walk in random environment, environment viewed by the particle, circle rotation, Central Limit Theorem, Poisson equation, mixing.}
\begin{document}

\begin{abstract}
We prove the Central Limit Theorem and superpolynomial mixing for environment viewed for the particle process in 
quasi periodic Diophantine random environment. The main ingredients are smoothness estimates for the solution of the Poisson 
equation and local limit asymptotics for certain accelerated walks.
\end{abstract}

\maketitle

\section{Introduction}

Environment viewed by the particle (EVP) 
process is a powerful tool in the study of random walks 
in random environment, (see \cite{Kozlov, BS02, Zeitouni04} and the references wherein).
In particular, for random walks on $\mathbb{Z}^d$, 
if this process possesses a stationary measure which is absolutely continuous 
with respect to the environment measure, and if the stationary measure has good mixing properties, then the corresponding random walk in random environment satisfies the Central 
Limit Theorem. However, good mixing properties of EVP are far from given.
In fact, 
the EVP process is a standard source of examples of exotic
behavior in both ergodic theory \cite{Kal82, dHS97} and limit theory \cite{KS79, Bolt89}. 
\\

The EVP process could be defined for random walks on arbitrary groups.
Consider a semigroup $G$ with a finite set of generators $\Gamma\ni\mathrm{id}$ 
acting on a compact metric space $\cX$. For every $\gamma\in\Gamma$ fix a positive continuous function $p(\cdot, \gamma):\cX\rightarrow [0,1]$, $\gamma \in \Gamma$, in such a way that $\DS \sum_{\gamma\in \Gamma} p(x, \gamma)\!\!=\!\!1$ for every $x\in \cX$. The formula
\begin{equation}
\label{E:transfer_operator}
T\phi(x) = \sum_{\gamma\in \Gamma} p(x, \gamma) \phi(\gamma x ),
\end{equation}
defines a Markov operator $T$ on the space $C(\cX)$ of continuous functions on $\cX$ that gives rise to a Markov process $(X_n)$ on $\cX$ which we call 
{\em environment viewed by the 
particle for RWRE on $G$. } We note that the operator \eqref{E:transfer_operator}
also appears naturally 
 in the theory of iterated function systems and its properties are widely studied in mathematics.
 
 If $\mu$ is a Borel probability measure on $\cX$ then by $\Prob_\mu$ (resp. $\EXP_\mu$) we denote the conditional probability (resp. expectation) given the distribution of $X_0$ is $\mu$. A Borel measure $\mu$ is called stationary if $\mu(T\phi)=\mu(\phi)$ for every $\phi \in C(\cX)$, where $\mu(\phi)$ stands for $\int_{\cX} \phi d\mu$. A stationary measure $\mu$ is mixing if $\EXP_\vartheta \phi(X_n) \to \mu(\phi)$ for any $\phi \in C(\cX)$ and any measure $\vartheta$ absolutely continuous with respect to $\mu$.\\

The application of the Krylov-Bogoliubov procedure and the compactness of $\cX$ yield the existence of a stationary distribution for 
the operator \eqref{E:transfer_operator},
however the uniqueness and mixing properties are not well understood.
 We refer to \cite{Conze_Guivarch_00, BS02, Bremont02, Bremont09} for discussions of
 the stationary measures
 for the case of $\mathbb{Z}^d$, to 
 \cite{DG19, Dolgopyat_Goldsheid_21} for case of finite extensions of $\mathbb{Z}$,
  and to
 \cite{BFLM, BQIII} for the case of larger groups, where even the case of constant 
 transition probabilities is far from settled.
 The mixing of the EVP process
in the case of the independent environment on $\mathbb{Z}$ was studied in 
\cite{Kesten77, Lalley86}, and a simpler proof could be found in \cite{DG13}. 

We note that there are several results on uniqueness and mixing under the assumption that \eqref{E:transfer_operator} is contracting in average or some related conditions (e.g. \cite{Barnsley_Demko_Elton_Geronimo_1988, Czudek_20, Doeblin_Fortet_1937, Onicescu_Mihoc_1935, Sleczka_11}, see also a survey \cite{Stenflo_12}). The contraction condition typically does not hold in the case of random 
walk in random environment, since in that case one typically assumes that the $G$ action
on $\cX$ preserves some non atomic probability measure.
\\

 Our goal in this paper is to go one step
beyond existence of stationary measures, and to study their ergodic properties.
Our results can be divided into two parts. First, we show that in the quite general setting unique
ergodicity of stationary measure implies mixing. In the second part we employ our approach
in the simplest possible setting: quasi periodic random walks on $\mathbb{Z}$
and work out precise mixing estimates in that case using the harmonic analysis on
the circle. As a direct application of our result we obtain the central limit theorem 
for a functional of the environment observed by the particles in that case.
We hope that our approach could be useful both for studying walks on more 
complicated groups and for the studies of further ergodic properties in the quasiperiodic 
environment. \\

Our first result says that if $\cX$ is compact and the random walk 
is lazy (recall that a walk is  lazy if it can stay at the same place for several units of time).
then the unique ergodicity implies mixing.

\begin{theorem}\label{theorem_mixing}
Suppose that the random walk $(X_n)$ with the Markov operator \eqref{E:transfer_operator} is uniquely ergodic (i.e. it has exactly one stationary measure $\nu$), $\cX$ is compact, $\id\in \Gamma$, the functions $p(\cdot, \gamma)$ are continuous for $\gamma\in\Gamma$ and there exists $\eps_0$ such that \\$\eps_0 \leq p(x, \id) \leq 1-\eps_0$ for every $x\in \cX$. Then
\begin{equation}
\label{E:mixing}
\lim_{n\to\infty} \EXP_x(\phi(X_n))=\nu(\phi) \quad \textrm{for every $x\in\cX$ and $\phi\in C(\cX)$.}
\end{equation}
\end{theorem}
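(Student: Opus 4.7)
The plan is to exploit the laziness hypothesis by writing $T$ as a convex combination of the identity operator and an auxiliary Markov operator, and then to boost unique ergodicity to convergence of $T^n\phi$ via a binomial expansion. First I would set $\tq(x,\id) = (p(x,\id)-\eps_0)/(1-\eps_0)$ and $\tq(x,\gamma) = p(x,\gamma)/(1-\eps_0)$ for $\gamma\neq\id$; the hypothesis $\eps_0 \le p(\cdot,\id) \le 1-\eps_0$ together with continuity of each $p(\cdot,\gamma)$ ensures that these are continuous probability weights. The associated Feller Markov operator $S\phi(x) = \sum_\gamma \tq(x,\gamma)\phi(\gamma x)$ then satisfies
$$T = \eps_0\, I + (1-\eps_0)\, S,$$
with $I$ the identity on $C(\cX)$. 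A direct computation shows that a Borel measure is $T$-stationary iff it is $S$-stationary, so $S$ inherits unique ergodicity with invariant measure $\nu$.

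Next I would invoke the Oxtoby-type theorem for Feller chains on a compact space: a standard weak-$*$ compactness argument (any subsequential limit of $\frac{1}{N}\sum_{k<N}(S^*)^k\delta_{x_N}$ is $S$-invariant, hence equal to $\nu$) upgrades unique ergodicity of $S$ to uniform Cesàro convergence
$$A_N\phi := \frac{1}{N}\sum_{k=0}^{N-1} S^k\phi \;\longrightarrow\; \nu(\phi) \qquad \text{uniformly on }\cX.$$

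The quantitative heart of the argument is the binomial expansion
$$T^n = \sum_{k=0}^n b_{n,k}\, S^k, \qquad b_{n,k} := \binom{n}{k}\eps_0^{\,n-k}(1-\eps_0)^k,$$
valid because $\eps_0 I$ commutes with $(1-\eps_0)S$. The weights $b_{n,k}$ form a $B(n,1-\eps_0)$ distribution, which is unimodal with peak value $\max_k b_{n,k} = O(n^{-1/2})$, so telescoping the absolute differences yields $\sum_k |b_{n,k}-b_{n,k-1}| = 2\max_k b_{n,k} = O(n^{-1/2})$. Consequently
$$\|T^n\phi - T^n S\phi\|_\infty \le \|\phi\|_\infty \sum_k |b_{n,k-1}-b_{n,k}| = O(\|\phi\|_\infty/\sqrt n),$$
and iterating yields $\|T^n\phi - T^n S^m\phi\|_\infty = O(m\|\phi\|_\infty/\sqrt n)$ for every $m \ge 0$. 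Averaging this over $m = 0,\dots,N-1$ and using that $T^n$ is a contraction on $C(\cX)$,
$$\|T^n\phi - \nu(\phi)\|_\infty \le \|T^n\phi - T^n A_N\phi\|_\infty + \|A_N\phi - \nu(\phi)\|_\infty = O(N\|\phi\|_\infty/\sqrt n) + o_N(1).$$
Choosing $N$ large to kill the second term and then $n \gg N^2$ gives $T^n\phi \to \nu(\phi)$ uniformly, hence at every $x$, which is precisely \eqref{E:mixing}.

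The only technically nontrivial point is the $O(n^{-1/2})$ bound on $\sum_k|b_{n,k}-b_{n,k-1}|$; everything else is bookkeeping from Krylov-Bogoliubov applied to the Feller operator $S$ together with the laziness splitting. I expect this binomial smoothing is the mechanism intended by the authors, as it is perfectly designed to convert the Cesàro statement supplied by unique ergodicity into the genuine pointwise convergence asserted in \eqref{E:mixing}.
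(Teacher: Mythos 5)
Your proof is correct, but it takes a genuinely different route from the paper's. The paper conditions on the \emph{accelerated walk} (the trajectory with repetitions erased): for a fixed accelerated segment $W$ the time $t_W$ to traverse it is a sum of independent geometric random variables with parameters $1-p(w,\id)$, and a local limit theorem for such sums (quoted from an external reference, plus a moderate-deviations tail bound proved in the appendix) shows that $t_W$ is spread essentially uniformly over windows of width $\delta\sqrt{n}$; averaging $\EXP_{e(W)}(\phi(X_{n-k}))$ over such a window then reduces the claim to the same uniform Ces\`aro convergence you use (the paper's Proposition~\ref{PrErg}). Your operator splitting $T=\eps_0 I+(1-\eps_0)S$ achieves the identical $\sqrt{n}$-smoothing of the time index, but the randomization is now an exact binomial independent of the path, so the local limit input collapses to the elementary facts that the binomial weights are unimodal with peak $O(n^{-1/2})$, whence $\sum_k|b_{n,k}-b_{n,k-1}|=2\max_k b_{n,k}=O(n^{-1/2})$. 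What your version buys: it is self-contained (no external LLT, no tail estimate, no bookkeeping over the set $\fS_n$ of accelerated segments), and it never uses the upper bound $p(x,\id)\le 1-\eps_0$, only the lower bound needed to split off $\eps_0 I$. What the paper's version buys: the accelerated-walk framework and the discrete-derivative estimates on the law of $t_W$ are exactly the machinery reused in the proof of the quantitative Theorem~\ref{theorem_rate}, where one needs $\sup_j|\delta_{m,n}(j)|=O(n^{-(m+1)/2})$ for the actual traversal times rather than for an auxiliary binomial; your decomposition would not directly substitute there. All steps of your argument check out: the weights $\tq(\cdot,\gamma)$ are continuous and nonnegative, $T$- and $S$-stationarity coincide, the Oxtoby argument gives uniform Ces\`aro convergence for the Feller operator $S$ on the compact space $\cX$, and the final optimization $n\gg N^2$ closes the proof.
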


\noindent We give two applications of this theorem.

\begin{corollary}
Let $f_1, \ldots, f_N$ be circle homeomorphisms, $p_1, \ldots p_N$ be positive numbers with $p_1+\cdots+p_N=1$. Let $(X_n)$ be a Markov process with the Markov operator
$$T\phi(x)= \sum_{i=1}^N p_i \phi(f_i(x)), \quad \phi \in C(\T).$$
If one of $f_i$'s is the identity and the action on the circle is minimal, then \eqref{E:mixing} holds.
\end{corollary}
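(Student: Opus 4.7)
The strategy is to verify the hypotheses of Theorem \ref{theorem_mixing} for $\cX = \T$ with $\Gamma = \{f_1,\ldots,f_N\}$. The easy checks: $\T$ is compact, $\id\in\Gamma$ by assumption, the coefficients $p(x,f_i) = p_i$ are positive constants (hence continuous), and minimality on a space with more than one point forces $N \geq 2$ with some $p_i > 0$ for $f_i \neq \id$, so if $f_j = \id$ then $p(x,\id) = p_j \in (0,1)$ and the bound $\eps_0 \leq p(\cdot,\id) \leq 1-\eps_0$ holds with $\eps_0 = \min(p_j, 1-p_j)$. The only substantive task is to establish unique ergodicity.

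Existence of a stationary measure $\nu$ is given by Krylov--Bogoliubov. First I would note that any stationary $\nu$ has full support: the identity $\nu = \sum_i p_i (f_i)_* \nu$ with all $p_i>0$ implies $f_i(\mathrm{supp}\,\nu)\subseteq \mathrm{supp}\,\nu$ for each $i$, so $\mathrm{supp}\,\nu$ is a nonempty closed forward-invariant set for the semigroup, and minimality of the action forces $\mathrm{supp}\,\nu = \T$. Next I would argue that any continuous $T$-invariant function is constant by a maximum principle: if $T\phi = \phi$ and $\phi(x_0)=\max\phi$, then $\phi(x_0)=\sum_i p_i \phi(f_i x_0)$ with each summand $\leq \max\phi$ forces $\phi(f_i x_0)=\max\phi$ for every $i$; iterating and invoking minimality, the max-set is dense, hence equal to $\T$, so $\phi$ is constant.

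The main obstacle is to upgrade these two observations into genuine unique ergodicity, i.e., to rule out multiple stationary measures despite both having full support. The plan is to combine the above continuous maximum principle with an Oxtoby-type argument: using the Feller property $T:C(\T)\to C(\T)$ together with the laziness $\id \in \Gamma$, show that for every $\phi \in C(\T)$ the Cesàro averages $\frac{1}{n}\sum_{k=0}^{n-1} T^k\phi$ form a uniformly convergent sequence, whose (necessarily continuous) limit is $T$-invariant and hence constant. This constant is then simultaneously equal to $\nu_1(\phi)$ and $\nu_2(\phi)$ for any two stationary measures $\nu_1,\nu_2$, yielding uniqueness. The delicate step is the uniform convergence --- equivalently, the equicontinuity of the Cesàro sums on $C(\T)$ --- which is where minimality and laziness must interact. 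Once unique ergodicity is in hand, Theorem \ref{theorem_mixing} applies verbatim and yields \eqref{E:mixing}.
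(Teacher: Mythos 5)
Your verification of the auxiliary hypotheses of Theorem \ref{theorem_mixing} (compactness, $\id\in\Gamma$, continuity and the two-sided bound on $p(\cdot,\id)$) is fine, and your two preliminary observations --- full support of any stationary measure, and the maximum principle showing that continuous $T$-invariant functions are constant --- are correct consequences of minimality. But the proof has a genuine gap exactly where you flag ``the delicate step'': you never prove the uniform convergence of the Ces\`aro averages $\frac1n\sum_{k=0}^{n-1}T^k\phi$, and without it the argument does not close. The two facts you do establish are strictly weaker than unique ergodicity: for a Feller chain, constancy of continuous invariant functions rules out multiple stationary measures only if one already knows the Ces\`aro averages converge uniformly (or at least that the limit along some subsequence is continuous); in general two distinct fully supported stationary measures can coexist with no nonconstant continuous invariant function. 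Equicontinuity of $\{T^k\phi\}$ is not available here --- random products of circle homeomorphisms are typically \emph{not} equicontinuous (on the contrary, they generically exhibit exponential local contraction) --- and laziness does not supply it. So the ``plan'' in your last paragraph is precisely the theorem that needs proving, not a routine step.

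For comparison, the paper does not attempt to prove unique ergodicity at all: it invokes Corollary 2.3 of Malicet's work on random walks on $\mathrm{Homeo}^+(S^1)$, which establishes unique ergodicity for minimal actions via a substantial analysis (an invariance principle / exponential synchronization dichotomy in the spirit of Deroin--Kleptsyn--Navas). That is the missing ingredient in your argument. If you want a self-contained proof you would have to reproduce something of that strength; otherwise the honest route is to cite such a result for the unique ergodicity and then apply Theorem \ref{theorem_mixing}, which is what the paper does.
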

\begin{proof}
The unique ergodicity follows from 
\cite[Corollary 2.3]{Malicet_17}.
\end{proof}

To give another application, fix an irrational number $\alpha$, a positive continuous function $\p\in C(\T)$ with $0<\p(x)<1$, $x\in\T$, and define
\begin{equation}\label{walk_def}
T\psi(x) = \p(x) \psi(x+\alpha)+ \q(x) \psi(x-\alpha), \quad T: C(\T)\rightarrow C(\T),
\end{equation}
where $\q(x)=1-\p(x)$.

This random walk was considered by Sinai \cite{Sinai_99}, where the author proved the unique ergodicity and mixing under assumptions that $\alpha$ is Diophantine and $\p$ is sufficiently smooth. Later Kaloshin and Sinai \cite{Kaloshin_Sinai_00} showed there is no need to assume that $\alpha$ is Diophantine\footnote{A number $\alpha\not\in \mathbb{Q}$ is called Diophantine of type $(c, \tau)$, $c>0$, $\tau\ge 0$, if
$$\bigg| \alpha - \frac{p}{q} \bigg| \ge \frac{c}{q^{2+\tau}} \quad \textrm{for every $p, q\in\mathbb{Z}, q\not = 0$.}$$
A number $\alpha$ is called Liouville when it is not Diophantine of any type.} when $\p$ is asymmetric, i.e. when
$$
\int_{\T} \log\frac{\p(x)}{\q(x)} dx \not = 0.
$$
The unique ergodicity in the symmetric case (i.e. when the integral above is zero) for any irrational $\alpha$ has been proven by Conze and Guivarc'h \cite{Conze_Guivarch_00}. Another proof of this result has been given in \cite{Czudek_24}. It has been proven there also that for any $\alpha$ irrational \eqref{walk_def} is mixing for a generic choice of $\p$. With Theorem \ref{theorem_mixing} we can strengthen the latter result.

\begin{corollary}
Let $\log \frac{\p(x)}{\q(x)}$ be continuous of bounded variation, $\alpha\not\in\mathbb{Q}$. Then  \eqref{E:mixing} holds.
\end{corollary}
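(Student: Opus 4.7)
The plan is to apply Theorem~\ref{theorem_mixing} not to $T$ itself---which is not lazy---but to the two-step operator $T^2$. Expanding,
\[
T^2\psi(x) \;=\; \p_2(x)\,\psi(x+2\alpha)\;+\;r(x)\,\psi(x)\;+\;\q_2(x)\,\psi(x-2\alpha),
\]
with $\p_2(x)=\p(x)\p(x+\alpha)$, $\q_2(x)=\q(x)\q(x-\alpha)$, and $r(x)=\p(x)\q(x+\alpha)+\q(x)\p(x-\alpha)$, shows that $T^2$ has the form \eqref{E:transfer_operator} with generators $\{R_{2\alpha},R_{-2\alpha},\id\}$. All three coefficients are continuous and strictly positive on $\T$, so by compactness there is $\eps_0>0$ with $\eps_0\le r(x)\le 1-\eps_0$; hence $T^2$ satisfies every hypothesis of Theorem~\ref{theorem_mixing} except possibly unique ergodicity.

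The cited results of \cite{Conze_Guivarch_00, Kaloshin_Sinai_00, Czudek_24} give, under our BV assumption on $\log(\p/\q)$, that $T$ has a unique stationary measure $\nu$. The task is to upgrade this to unique ergodicity of $T^2$. Let $\mu$ be a $T^2$-stationary probability measure. Then $\tfrac12(\mu+T\mu)$ is $T$-stationary and so equals $\nu$; hence $\mu\le 2\nu$, $\mu\ll\nu$ with density $h:=d\mu/d\nu\in[0,2]$, and $g:=h-1$ satisfies $Lg=-g$ in $L^2(\nu)$, where $L$ is the $L^2(\nu)$-adjoint of $T$. Since $L$ is Markov with $L\mathbf{1}=\mathbf{1}$, Jensen's inequality gives $|Lg|^2\le L(|g|^2)$; combined with $\|Lg\|_{L^2}=\|g\|_{L^2}$ this forces $|g|^2=L(|g|^2)$ $\nu$-a.e., so $|g|^2$ is $L$-invariant. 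By $\nu$-ergodicity (a consequence of unique ergodicity of $T$) $|g|$ is $\nu$-a.e.\ constant. Either $g\equiv 0$ and $\mu=\nu$, or $g=c(\mathbf{1}_A-\mathbf{1}_B)$ with $\nu(A)=\nu(B)=1/2$; in that case $Tg=-g$ and positivity of $\p,\q$ force
\[
\mathbf{1}_A(x+\alpha)\;=\;\mathbf{1}_A(x-\alpha)\;=\;1-\mathbf{1}_A(x) \qquad \text{for $\nu$-a.e.\ }x.
\]

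The hard step is excluding this bipartite structure: the relation above says that $\mathbf{1}_A$ is a $\nu$-a.e.\ eigenfunction of the Koopman operator of $R_\alpha$ with eigenvalue $-1$. For irrational $\alpha$, $e^{2\pi i n\alpha}=-1$ has no solution $n\in\mathbb{Z}$, so Fourier analysis on $\T$ rules this out immediately once the $\nu$-a.e.\ relation is promoted to a Lebesgue-a.e.\ one. This is precisely where the BV hypothesis on $\log(\p/\q)$ must enter, through some regularity of $\nu$ (a quasi-invariance or partial absolute continuity with respect to Lebesgue) that allows such an upgrade, giving $g\equiv 0$ and hence $\mu=\nu$. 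This is the main obstacle of the argument.

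With unique ergodicity of $T^2$ in hand, Theorem~\ref{theorem_mixing} yields $T^{2n}\phi(x)\to\nu(\phi)$ for every $x\in\T$ and $\phi\in C(\T)$. For odd times the Markov property gives $T^{2n+1}\phi(x)=\p(x)\,T^{2n}\phi(x+\alpha)+\q(x)\,T^{2n}\phi(x-\alpha)\to(\p(x)+\q(x))\,\nu(\phi)=\nu(\phi)$, so $T^n\phi(x)\to\nu(\phi)$ along both parities, which is \eqref{E:mixing}.
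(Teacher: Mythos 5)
Your overall architecture matches the paper's: pass to the two-step operator $T^2$, which is lazy with holding probability $r(x)=\p(x)\q(x+\alpha)+\q(x)\p(x-\alpha)$ bounded away from $0$ and $1$, apply Theorem~\ref{theorem_mixing} to the even-step chain, and recover odd times by conditioning on $X_1$. That part is correct and is exactly what the paper does (the paper even leaves the laziness verification implicit, which you spell out). The problem is the remaining hypothesis of Theorem~\ref{theorem_mixing}: unique ergodicity of the \emph{two-step} chain. The paper does not derive this from unique ergodicity of $T$; it takes it directly from \cite{Conze_Guivarch_00}, which is cited precisely for the process restricted to even steps. You instead try to bootstrap from unique ergodicity of $T$, and you honestly flag that you cannot exclude the ``bipartite'' alternative $Tg=-g$, $g=\mathbf{1}_A-\mathbf{1}_B$. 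This is a genuine gap, not a cosmetic one: unique ergodicity of a Markov operator does not in general imply unique ergodicity of its square (a period-two chain on two states is the standard counterexample), so some additional input about this specific walk is unavoidable at that point.

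Moreover, the escape route you sketch --- upgrading the $\nu$-a.e.\ relation $\mathbf{1}_A(x\pm\alpha)=1-\mathbf{1}_A(x)$ to a Lebesgue-a.e.\ one via ``regularity of $\nu$ with respect to Lebesgue'' --- cannot work in the stated generality. The paper's own Remark after Theorem~\ref{theorem_liouvillle} points out that in the symmetric case $\nu$ is equivalent to Lebesgue only when $\ln\p-\ln\q$ is a coboundary, which fails generically; so for a generic symmetric $\p$ the stationary measure is singular and your Fourier argument has nothing to act on. To repair the proof you should either invoke the two-step unique ergodicity statement from \cite{Conze_Guivarch_00} directly (as the paper does), or find an argument excluding the eigenvalue $-1$ that does not rely on absolute continuity of $\nu$. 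As written, the proof is incomplete at its central step.
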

\begin{proof}
 The process $(X_{n})$ restricted to even steps  satisfies the assumptions of Theorem \ref{theorem_mixing} by \cite{Conze_Guivarch_00}, thus \eqref{E:mixing} holds for $(X_{2n})$. Then
conditioning with respect to $X_1$ gives the result for the process restricted to odd steps. Combining these yields the assertion.
\end{proof}

With an additional assumption that $\alpha$ is Diophantine and $\p$ is sufficiently smooth we  obtain the polynomial rate of mixing for \eqref{walk_def}. An important step in the proof is solving the Poisson equation (Theorem \ref{theorem_poisson}), which also implies the central limit theorem even without estimating the rate of mixing (Corollary~\ref{CrCLTEnv}). Then using Theorem \ref{theorem_poisson} we can modify the proof of Theorem \ref{theorem_mixing} to get the rate of convergence (Theorem \ref{theorem_rate}).
Denote
$$
\|\psi\|_{C^r} := \max\big\{ \|\psi\|_\infty, \| \psi' \|_\infty,\cdots, \|\psi^{(r)}\|_\infty \big\}.
$$

\begin{theorem}\label{theorem_poisson}
Let $\alpha\not\in\mathbb{Q}$ be Diophantine of type $(c,\tau)$, $m_0$ the lowest integer with $m_0>1+\tau$. Let $r\ge 0$, $\mathfrak{p}\in C^{r+3m_0}(\mathbb{T})$ symmetric, and let $\mu$ be the unique stationary measure for (\ref{walk_def}). Then there exists a constant $A>0$ such that for every $\psi\in C^{r+2m_0}(\mathbb{T})$ with $\mu(\psi)=0$ 
 the Poisson equation 
$$T\varphi - \varphi=\psi$$
admits a solution $\varphi$ which is
$C^{r}$ and $\|\varphi\|_{C^r}\le A\| \psi \|_{C^{r+2m_0}}$. 

If $\p$ is asymmetric we have a stronger estimate. Namely, it suffices to assume that 
$\p \in C^{r+2m_0}$, $\psi \in C^{r+m_0}$. In that case $\|\varphi\|_{C^r}\le A\| \psi \|_{C^{r+m_0}}$ for some constant $A$ independent of $\psi$.
\end{theorem}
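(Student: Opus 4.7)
The plan is to reduce the second-order Poisson equation to a sequence of scalar cohomological equations over the rotation $R_\alpha$, each of which loses $m_0$ derivatives by the standard Diophantine analysis. Introducing the first difference $g(x) := \varphi(x+\alpha) - \varphi(x)$ rewrites $T\varphi - \varphi = \psi$ as the first-order linear equation
$$\p(x)\, g(x) - \q(x)\, g(x-\alpha) = \psi(x). \quad (\ast)$$
In the symmetric case the function $a(x) := \log(\p(x)/\q(x))$ has zero mean, so the classical cohomological equation yields $H \in C^{r+2m_0}$ with $H(x) - H(x-\alpha) = -a(x)$ and $\|H\|_{C^{r+2m_0}} \le C\|\p\|_{C^{r+3m_0}}$. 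Setting $w := e^H$ and $D := \p w$, the identity $e^{H(x)-H(x-\alpha)} = \q(x)/\p(x)$ also forces $D(x) = \q(x)\, w(x-\alpha)$, and substituting $g = w g_0$ in $(\ast)$ turns it into the pure cohomological equation
$$g_0(x) - g_0(x-\alpha) = \psi(x)/D(x).$$

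Next I would verify solvability by identifying the stationary density explicitly. Using $D(x) = \p(x) w(x) = \q(x)\, w(x-\alpha)$, the adjoint stationarity equation $\p(x-\alpha)\rho(x-\alpha) + \q(x+\alpha)\rho(x+\alpha) = \rho(x)$ reduces for $\rho := 1/D$ to the pointwise identity $\q(x)/D(x) + \p(x)/D(x) = 1/D(x)$; hence $d\mu = (c/D)\,dx$ is the unique stationary measure, and $\mu(\psi) = 0$ is exactly the Lebesgue condition $\int \psi/D\,dx = 0$, the solvability criterion for the $g_0$ equation. A second invocation of the Diophantine cohomological inversion then gives $g_0 \in C^{r+m_0}$ with $\|g_0\|_{C^{r+m_0}} \le C\|\psi\|_{C^{r+2m_0}}$. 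Fixing the free additive constant in $g_0$ so that $\int w(x) g_0(x)\,dx = 0$ makes $\int g\,dx = 0$, so the final equation $\varphi(x+\alpha) - \varphi(x) = w(x) g_0(x)$ is solvable, and a third cohomological inversion delivers $\varphi \in C^r$ with the claimed bound.

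For the asymmetric case the Birkhoff sums $\sigma_k(x) := \sum_{j=0}^{k-1} a(x-j\alpha)$ drift linearly to $\pm\infty$, so $(\ast)$ can be inverted directly by the geometrically convergent series $g(x) = \sum_{k\ge 0} e^{-\sigma_k(x)}\, \psi(x-k\alpha)/\p(x-k\alpha)$ (or its backward analog, depending on the sign of $\int a$); the derivatives of $e^{-\sigma_k}$ are uniformly summable because the Birkhoff sums of the derivatives of $a$ are bounded via the Diophantine cohomological equation applied to $\p \in C^{r+2m_0}$, and one final cohomological step recovers $\varphi$ from $g$. The main obstacle I anticipate is the regularity bookkeeping: in the symmetric case three cohomological inversions are composed in sequence — constructing $w$ from $\p$, solving for $g_0$, and reconstructing $\varphi$ from $g$ — each costing $m_0$ derivatives, which is precisely why $\p \in C^{r+3m_0}$ and $\psi \in C^{r+2m_0}$ is the natural hypothesis, while the asymmetric case avoids the middle inversion and therefore needs only $\p \in C^{r+2m_0}$ and $\psi \in C^{r+m_0}$.
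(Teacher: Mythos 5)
Your symmetric-case argument is correct and is essentially the paper's proof in a different packaging: writing $g=\varphi(\cdot+\alpha)-\varphi$ and conjugating the resulting first-order equation $(\ast)$ by $w=e^{H}$ reproduces the same chain of three cohomological inversions that the paper uses (construct a multiplicative coboundary from $\p/\q$, solve an intermediate equation whose solvability is exactly $\mu(\psi)=0$ read against the explicit stationary density, then invert once more to get $\varphi$), your identification $d\mu\propto dx/D$ agrees with the paper's density $g/\q$, the use of the free additive constant in $g_0$ to force $\int w g_0\,dx=0$ is the right move, and the derivative count $C^{r+3m_0}\to C^{r+2m_0}\to C^{r+m_0}\to C^{r}$ matches the statement.

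The asymmetric case, however, has a genuine gap. After producing $g\in C^{r+m_0}(\T)$ by the geometric series, you must still solve $\varphi(x+\alpha)-\varphi(x)=g(x)$, and Lemma~\ref{lemma_cohomological} requires $\int_\T g\,dx=0$. This is not automatic: your sketch never invokes the hypothesis $\mu(\psi)=0$ in this case, yet for a non-centered $\psi$ the series still converges while $\int_\T g\,dx\neq 0$, so no continuous $\varphi$ can exist; and unlike the symmetric case there is no free constant in $g$ to adjust (the homogeneous version of $(\ast)$ has no nonzero continuous solution when $\int\log(\p/\q)\,dx\neq0$). The paper devotes a substantial telescoping computation to precisely this point (showing $\int\kappa/g\,dx=0$). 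In your framework the cleanest repair is a flux identity: the stationarity equation shows $j(x):=\p(x)\rho(x)-\q(x+\alpha)\rho(x+\alpha)$ is $R_\alpha$-invariant, hence a constant $c$, and $c\neq0$ in the asymmetric case (otherwise integrating $\log\big(\rho(x+\alpha)/\rho(x)\big)=\log\p(x)-\log\q(x+\alpha)$ would force $\int\log(\p/\q)\,dx=0$); pairing $(\ast)$ with $\rho$ and changing variables gives $0=\int\psi\rho\,dx=\int g\,j\,dx=c\int g\,dx$. Without some such argument the final cohomological step does not go through.
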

\begin{corollary}
\label{CrCLTEnv}
If $\alpha$, $\p$ and $\psi$ are  as above, $(X_n)$ is the stationary Markov process with transition operator (\ref{walk_def}), 
 then the process $ \DS  \left(\sum_{n=1}^N \psi(X_n)\right)$ satisfies the functional Central Limit Theorem.
\end{corollary}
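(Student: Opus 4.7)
The plan is to use Gordin's martingale approximation scheme, which is the standard route from a Poisson equation to a CLT. The key input is Theorem \ref{theorem_poisson}, which for the given $\alpha$, $\p$, $\psi$ provides a continuous (hence bounded) solution $\varphi\in C(\T)$ of
$$T\varphi - \varphi = \psi.$$

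First, I would introduce the natural filtration $\mathcal{F}_n=\sigma(X_0,\dots,X_n)$ and define $D_{n+1}:=\varphi(X_{n+1}) - T\varphi(X_n)$. By the Markov property, $\EXP[\varphi(X_{n+1})\mid \mathcal{F}_n] = T\varphi(X_n)$, so under the stationary law $\Prob_\mu$ the sequence $(D_n)$ consists of bounded, stationary martingale differences. Rewriting the Poisson equation as $\psi(X_n) = [\varphi(X_{n+1}) - \varphi(X_n)] - D_{n+1}$ and telescoping gives
$$\sum_{n=1}^N \psi(X_n) \;=\; \varphi(X_{N+1}) - \varphi(X_1) \;-\; \sum_{n=1}^N D_{n+1},$$
so the boundary contribution is uniformly $O(1)$ and is negligible on the $\sqrt N$ scale.

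Second, I would invoke the functional CLT for stationary ergodic martingale differences (Billingsley). This requires ergodicity of the shift on path space under $\Prob_\mu$, which follows from the unique ergodicity of $T$: for a Feller operator on a compact space, uniqueness of the invariant probability measure is equivalent to ergodicity of the associated stationary Markov shift, and the hypotheses of Theorem \ref{theorem_poisson} include the unique ergodicity results of Sinai, Kaloshin--Sinai, and Conze--Guivarc'h covering both the symmetric and asymmetric cases. Since $(D_n)$ is then bounded, stationary, ergodic, and square integrable, Billingsley's theorem yields the weak convergence $N^{-1/2} M_{\lfloor N\cdot\rfloor} \Rightarrow \sigma W_\cdot$, where $M_k=\sum_{n=1}^k D_{n+1}$ and $\sigma^2 = \EXP_\mu[D_1^2]$. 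Combining with the boundary estimate transfers the same invariance principle to $N^{-1/2}\sum_{n=1}^{\lfloor Nt\rfloor}\psi(X_n)$.

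The main obstacle has already been overcome in Theorem \ref{theorem_poisson}; once a continuous Poisson solution is available, the remainder is a routine application of Gordin's method. The only residual subtlety is the degenerate case $\sigma=0$, which would correspond to $\psi$ being a strong pointwise coboundary; the functional CLT then still holds trivially with the degenerate limit.
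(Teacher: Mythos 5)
Your proposal is correct and follows essentially the same route as the paper: both use the Poisson solution $\varphi$ from Theorem \ref{theorem_poisson} to write $\sum_{n=1}^N\psi(X_n)$ as a stationary ergodic square-integrable martingale plus an $O(1)$ boundary term, and then invoke a martingale functional CLT (the paper cites Brown's theorem where you cite Billingsley's, but these are interchangeable here). Your additional remarks on ergodicity of the stationary shift and on the degenerate case $\sigma=0$ are correct details that the paper leaves implicit.
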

\begin{proof}[Proof of the Corollary]
We can apply the martingale decomposition method. Under the assumptions of Theorem \ref{theorem_poisson} the Poisson equation possesses a continuous solution $\varphi$.
$$
\sum_{n=1}^N \psi(X_n)
= \sum_{n=1}^N \bigg( T\varphi(X_n) - \varphi(X_n) \bigg) = \sum_{n=1}^{N-1} \bigg( T\varphi(X_n) - \varphi(X_{n+1}) \bigg) + T\varphi(X_N) - \varphi(X_1).
$$
The last two terms become negligible when divided by $\sqrt{N}$, and the sum is a square integrable martingale with stationary ergodic increments. Theorem 3 in \cite{Brown_71} completes the proof. 
\end{proof}

\begin{theorem}\label{theorem_rate}
Let $\alpha\not\in\mathbb{Q}$ be Diophantine of type
 $(c,\tau)$, $m_0$ the lowest integer with $m_0>\tau+1$. Let $k\ge 1$ and $r = 6km_0$. If $\p\in C^{r+m_0}(\T)$ is symmetric, then there exists a constant $A>0$ such that for every $x\in \T$ and $\psi \in C^r(\T)$
 $$
 \bigg| \EXP_x \psi (X_n) - \nu(\psi) \bigg| \le A \| \psi \|_{C^r}  n^{-k/2} \ln n,
 $$
 where $(X_n)$ is the Markov process (\ref{walk_def}), $\nu$ is the unique stationary distribution. If $\p\in C^{r+m_0}$ is asymmetric then the same is true with $r=4km_0$.
\end{theorem}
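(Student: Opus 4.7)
The strategy is to sharpen the qualitative argument behind Theorem \ref{theorem_mixing} into a polynomial rate by feeding in the iterated Poisson equation of Theorem \ref{theorem_poisson}.

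First, I would pass to $T^2$ in order to exploit the laziness that $T$ itself lacks. A direct computation shows that the coefficient of $\phi(x)$ in $T^2\phi(x)$ equals
\[
\eta(x) = \p(x)\q(x+\alpha) + \q(x)\p(x-\alpha),
\]
which under the assumption $0<\p<1$ lies in a compact subinterval of $(0,1)$ uniformly in $x$. Setting $\eta_0 = \inf_x\eta(x)>0$ and writing $T^2 = \eta_0\,\id + (1-\eta_0)R$ for the resulting Markov operator $R$, the binomial expansion
\[
T^{2n}\psi(x) = \sum_{j=0}^n b_{n,j}\,R^j\psi(x), \qquad b_{n,j} := \binom{n}{j}\eta_0^{n-j}(1-\eta_0)^j,
\]
reduces the problem to estimating a weighted sum of $R^j\psi$ whose weights are concentrated in a window of width $\sqrt{n}$ around $(1-\eta_0)n$.

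Second, I would iterate Theorem \ref{theorem_poisson}. Assuming $\nu(\psi)=0$ (the general case reduces to this), set $\tilde\psi_0 := \psi$ and recursively solve $T\varphi_{j+1}-\varphi_{j+1}=\tilde\psi_j$, $\tilde\psi_{j+1} := \varphi_{j+1} - \nu(\varphi_{j+1})$. The identity $T\tilde\psi_{j+1}-\tilde\psi_{j+1}=\tilde\psi_j$ telescopes to $\sum_{n=0}^{N-1}T^n\tilde\psi_j = T^N\tilde\psi_{j+1}-\tilde\psi_{j+1}$, so $\|\sum_{n=0}^{N-1}T^n\tilde\psi_j\|_\infty \le 2\|\tilde\psi_{j+1}\|_\infty$ uniformly in $N$. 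Nesting this telescoping argument yields uniform $C\|\psi\|_{C^r}$ bounds on all $\ell$-fold iterated Cesaro sums of $T^n\psi$, for every $\ell$ up to the number of Poisson iterations that the regularity of $\psi$ affords: in the symmetric case $3k$ iterations from $\psi\in C^{6km_0}$, and in the asymmetric case $4k$ iterations from $\psi\in C^{4km_0}$. Expanding $R^j$ as a polynomial in $T^2$ with coefficients of controlled total mass transfers these iterated Cesaro bounds to analogous bounds for $R$.

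Third, I would apply Abel summation by parts to $\sum_j b_{n,j}R^j\tilde\psi_0(x)$, moving the differences onto the binomial weights. After $\ell$ summations, $R^j\tilde\psi_0$ is replaced by its $\ell$-fold iterated Cesaro sum (bounded in the previous step) and $b_{n,j}$ is replaced by $\Delta^\ell b_{n,j}$. A local limit estimate for the binomial provides $\sum_j|\Delta^\ell b_{n,j}|\le C_\ell\, n^{-(\ell-1)/2}$, because the binomial density is of order $n^{-1/2}$ near its mode and each forward difference contributes an extra factor of $n^{-1/2}$ when integrated against it. Choosing $\ell$ in terms of $k$ produces the target decay $n^{-k/2}$ for even times; odd times reduce to even times by one application of $T$, which is a contraction on $C(\T)$. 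The extra $\ln n$ factor enters when truncating the binomial tails at $|j-(1-\eta_0)n|\lesssim \sqrt{n\ln n}$, outside of which $b_{n,j}$ is already superpolynomially small.

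The main obstacle is the smoothness bookkeeping: each Poisson iteration loses $2m_0$ (resp.\ $m_0$) derivatives, and the number of iterated Cesaro sums one needs is determined by the number of Abel summations required to push the binomial decay down to $n^{-k/2}$. Matching these two counts is exactly what fixes the thresholds $r = 6km_0$ and $r = 4km_0$. A secondary technical point is the transfer of iterated Cesaro estimates from $T$ to $R$; this is clean because $R$ is a polynomial in $T$ with nonnegative coefficients, but must be carried out carefully so that the constants do not blow up with the number of iterations.
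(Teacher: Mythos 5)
Your proposal follows the same architecture as the paper's proof --- pass to even times, iterate the Poisson equation of Theorem~\ref{theorem_poisson}, perform repeated Abel summation so that the observable is replaced by a uniformly bounded quantity while the ``time-randomization'' weights are replaced by their iterated differences, bound those differences by a local limit estimate of order $n^{-(\ell+1)/2}$ in sup norm, and truncate the tails at scale $\sqrt{n}\ln n$ to collect the $\ln n$ factor. The one genuine difference is where the smooth time-randomization density comes from: the paper conditions on the accelerated walk $W$ (the sequence of distinct sites visited), so that the elapsed time $t_W$ is a sum of independent geometric variables and Propositions~\ref{decay_delta} and~\ref{moderate_deviations} supply the difference and tail bounds; you instead extract the laziness at the operator level, writing $T^2=\eta_0\,\id+(1-\eta_0)R$ and expanding $T^{2n}$ binomially. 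Your decomposition is legitimate ($R$ is indeed Markov, since the diagonal coefficient $\p(x)\q(x+\alpha)+\q(x)\p(x-\alpha)$ of $T^2$ is uniformly bounded away from $0$ and $1$) and is arguably cleaner here, since no conditioning is needed and the binomial difference/tail estimates are elementary; the paper's accelerated-walk version is what generalizes to Theorem~\ref{theorem_mixing}'s abstract setting.

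Two points need repair. First, the step ``expanding $R^j$ as a polynomial in $T^2$ with coefficients of controlled total mass'' is false: the coefficients of $R^j=(1-\eta_0)^{-j}(T^2-\eta_0\,\id)^j$ alternate in sign and their absolute values sum to $\bigl((1+\eta_0)/(1-\eta_0)\bigr)^j$, which grows exponentially. Fortunately this transfer is unnecessary, because $R-\id=(1-\eta_0)^{-1}(T^2-\id)$, so the Poisson equation for $R$ is solved directly by (a constant multiple of) the solution for $T^2$, and the telescoping bounds for the iterated partial sums of $R^j\psi$ follow at once. Second, your difference bound $\sum_j|\Delta^\ell b_{n,j}|\le C_\ell n^{-(\ell-1)/2}$ would force $\ell=k+1$ Abel summations, which your derivative budget ($3k$ symmetric Poisson solves from $C^{6km_0}$, i.e.\ $k$ solves for $T^2$) does not afford. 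The correct bookkeeping, as in the paper, is $\ell=k$ summations combined with the pointwise bound $\sup_j|\Delta^k b_{n,j}|=O(n^{-(k+1)/2})$ and the window of length $O(\sqrt{n}\ln n)$, yielding $O(n^{-k/2}\ln n)$; the boundary terms produced by the summations by parts are exponentially small since $b_{n,0}=\eta_0^n$ and $b_{n,n}=(1-\eta_0)^n$. With these two corrections the argument goes through.
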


In Theorems \ref{theorem_poisson} and \ref{theorem_rate} the assumptions that $\alpha$ is Diophantine and $\psi$ is sufficiently smooth are both unremovable. Indeed, it has been shown (see Theorem 3 in \cite{Czudek_22}) that for every $\alpha$ Lioville it is possible to construct a $C^\infty$ observable for which the Central Limit Theorem fails. In view of Corollary \ref{CrCLTEnv} Theorem \ref{theorem_poisson} cannot hold for such observable. In the same way Theorem 2 in \cite{Czudek_22} implies that the observable must be sufficiently smooth even if $\alpha$ is Diophantine.

In order to give a counterexample to Theorem \ref{theorem_rate} more work needs to be done. 

\begin{theorem}\label{theorem_liouvillle}
If $\alpha$ is Liouville, $\p\in C^\infty(\T)$ with $\eps\leq \p\leq 1-\eps$,
then there exist $\varphi \in C^\infty(\T)$ and $\cG\subseteq \T$ of positive Lebesgue measure such that for every $x\in \cG$ and $\beta>0$ there exist infinitely many $N$'s with
\begin{equation}
\label{LioTimes}
\left| \EXP_x \varphi(X_N) - \int_\T \varphi(z) d \nu(z) \right|> \frac{1}{N^\beta},
\end{equation}
where $(X_n)$ evolves with the rule (\ref{walk_def}) and $\nu$ is the unique stationary measure.
\end{theorem}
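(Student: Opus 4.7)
The plan is to construct $\varphi$ as a lacunary Fourier cosine series
$$
\varphi(x)=\sum_{k=1}^{\infty}\lambda_k\cos(2\pi Q_k x)
$$
concentrated at the denominators of extremely good rational approximants $(P_k,Q_k)$ to $\alpha$. Since $\alpha$ is Liouville, for any sequence $a_k\to\infty$ we may extract integer pairs $(P_k,Q_k)$ with $|Q_k\alpha-P_k|\le Q_k^{-a_k}$ and $Q_k$ arbitrarily large. Inductively: having fixed the data for stages $1,\ldots,k-1$, set $\lambda_k:=Q_k^{-k}$ and $N_k:=\lfloor Q_k^{a_k/2}\rfloor$. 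By Theorem~\ref{theorem_mixing} applied to the cosines $\cos(2\pi Q_m\cdot)$, together with the equicontinuity of $\{T^N\cos(2\pi Q_m\cdot)\}_{N\ge 0}$ on the compact space $\T$, for any $\delta>0$ there is a threshold $N^*_m(\delta)<\infty$ beyond which $\sup_x|T^N\cos(2\pi Q_m\cdot)(x)-\hat\nu(-Q_m)|<\delta$, where $\hat\nu(-Q):=\int\cos(2\pi Q y)\,d\nu(y)$. Choose the next Liouville approximant $(P_k,Q_k,a_k)$ so that $N_k$ exceeds $N^*_m(\lambda_k/(4k\lambda_m))$ for every $m<k$ and so that $\sum_{m>k}\lambda_m$ will be negligible; this is possible because Liouvilleness supplies arbitrarily large $Q_k$ with arbitrarily large $a_k$. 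Smoothness $\varphi\in C^\infty(\T)$ follows from $\lambda_k Q_k^r=Q_k^{r-k}$ being summable in $k$ for every fixed $r$.

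At time $N_k$ the $Q_k$-mode is essentially preserved. Writing $\Delta_k:=Q_k\alpha-P_k$ and $X_N-x=\alpha S_N$ with $S_N$ the signed jump sum ($|S_N|\le N$),
$$
T^{N_k}\cos(2\pi Q_k\cdot)(x)=\Re\bigl[e^{2\pi i Q_k x}\,\EXP_x e^{2\pi i\Delta_k S_{N_k}}\bigr]=\cos(2\pi Q_k x)+O(|\Delta_k|N_k),
$$
and $|\Delta_k|N_k\le Q_k^{-a_k/2}$ is super-polynomially small. By the choice of $N_k$ the modes $m<k$ contribute at most $\sum_{m<k}\lambda_m\cdot(\lambda_k/(4k\lambda_m))\le \lambda_k/4$, while the modes $m>k$ contribute $O(\sum_{m>k}\lambda_m)=o(\lambda_k)$. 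Uniformly in $x$, therefore,
$$
T^{N_k}\varphi(x)-\nu(\varphi)=\lambda_k\bigl[\cos(2\pi Q_k x)-\hat\nu(-Q_k)\bigr]+o(\lambda_k).
$$

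Set $\cG_k:=\{x\in\T:|\cos(2\pi Q_k x)-\hat\nu(-Q_k)|>1/2\}$. Because $x\mapsto Q_k x\bmod 1$ preserves Lebesgue measure on $\T$ and $|\hat\nu(-Q_k)|\le 1$, a direct calculation shows $\Leb(\cG_k)\ge 1/2$ for every $k$. Define $\cG:=\limsup_k\cG_k$; reverse Fatou yields $\Leb(\cG)\ge 1/2$, and for every $x\in\cG$ the event $x\in\cG_k$ holds for infinitely many $k$, whence $|T^{N_k}\varphi(x)-\nu(\varphi)|\ge \lambda_k/4$. Finally, with $\lambda_k=Q_k^{-k}$ and $N_k=Q_k^{a_k/2}$ the inequality $\lambda_k>N_k^{-\beta}$ reduces to $k<\beta a_k/2$, which (taking for instance $a_k=k^2$) holds for all sufficiently large $k$ at every fixed $\beta>0$, establishing \eqref{LioTimes}.

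The principal obstacle is controlling the contribution of the lower modes $m<k$ at time $N_k$. Theorem~\ref{theorem_mixing} is only qualitative, and since $\alpha$ is Liouville no quantitative mixing rate from Theorem~\ref{theorem_rate} is available. The inductive construction circumvents this by working with the \emph{finite} collection $\{\cos(2\pi Q_m\cdot)\}_{m<k}$ at each stage, whose uniform mixing time is automatically finite, then exploiting the abundance of Liouville approximants to choose $(Q_k,a_k)$ so that $N_k=Q_k^{a_k/2}$ comfortably overshoots this threshold. A subsidiary delicate point is that in the symmetric case the stationary measure $\nu$ may be singular, so $\hat\nu(-Q_k)$ need not decay; defining $\cG_k$ by separation from the \emph{actual} value $\hat\nu(-Q_k)$ sidesteps this via the Lebesgue-invariance of $x\mapsto Q_k x$.
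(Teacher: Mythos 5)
Your overall architecture --- a lacunary cosine series $\sum_k\lambda_k\cos(2\pi Q_kx)$ at denominators of very good approximants, freezing of the top mode at time $N_k$ because $|\Delta_k|N_k$ is super\-polynomially small, and $\cG=\limsup_k\cG_k$ with reverse Fatou --- is the same as the paper's, and those parts (the identity $T^{N_k}\cos(2\pi Q_k\cdot)(x)=\cos(2\pi Q_kx)+O(|\Delta_k|N_k)$, the bound $\Leb(\cG_k)\ge 1/2$, the final exponent count) are fine. The divergence, and the gap, is in how you control the lower modes $m<k$ at time $N_k$.

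First, the finite uniform threshold $N_m^*(\delta)$ does not follow from ``equicontinuity of $\{T^N\cos(2\pi Q_m\cdot)\}_{N\ge 0}$'': that equicontinuity is neither proved in the paper nor obvious (differentiating $T^N\phi$ produces derivatives of $N$-fold products of $\p,\q$ along paths, which a priori grow linearly in $N$), and Theorem \ref{theorem_mixing} as stated is only pointwise in $x$. This particular gap is repairable (the estimates in the proof of Theorem \ref{theorem_mixing} are uniform in the starting point, or one can invoke Egorov and shrink $\cG_k$), but as written the justification is wrong.

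Second, and more seriously, the induction is circular. The accuracy you demand of mode $m$ is $\delta=\lambda_k/(4k\lambda_m)$ with $\lambda_k=Q_k^{-k}$; smoothness forces $\lambda_k$ to decay faster than every power of $Q_k$, so this coupling cannot be removed. Hence the threshold you must beat, $G(Q):=\max_{m<k}N_m^*\bigl(Q^{-k}/(4k\lambda_m)\bigr)$, is an increasing function of the yet-to-be-chosen $Q_k$ whose growth you cannot control --- no quantitative mixing rate is available for Liouville $\alpha$, which is precisely what the theorem asserts. You therefore need a denominator $Q$ with $\|Q\alpha\|\le 1/N_k\le 1/G(Q)$, and Liouvilleness does not supply this: it gives, for each fixed exponent $a$, infinitely many $Q$ with $\|Q\alpha\|\le Q^{-a}$, but it does not let you solve $\|Q\alpha\|\le 1/G(Q)$ for an arbitrary increasing $G$ (e.g.\ if the convergents satisfy $\|q_j\alpha\|$ of order $q_j^{-j}$, the number is Liouville, yet no $Q$ satisfies $\|Q\alpha\|\le e^{-Q}$). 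The paper sidesteps this entirely with a dichotomy that uses no mixing information about the lower modes: at stage $n$ it considers the set $H_n\subset G_{q_n}^-$ where the already-built partial sum $\psi_{n-1}$ deviates upward by $\frac{\sqrt2}{4}q_n^{-\sqrt{n+1}}$ at time $\tq_n$; if $H_n$ fills half of $G_{q_n}^-$ it sets $\varphi_n=0$ and takes $\cG_n=H_n$, and otherwise it adds the new mode with a sign (choosing $G^+_{q_n}$ versus $G^-_{q_n}$ according to the sign of $\nu(\hphi_n)$) that the old modes cannot cancel on the complementary half. To complete your argument you would need either this dichotomy or some quantitative substitute for $N_m^*$.
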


\begin{remark}
It is known (see \cite{Conze_Guivarch_00}) that $\nu$ is equivalent to Lebesgue in the asymmetric
case. However, in the symmetric case the equivalence holds only 
if $\ln \p-\ln \q$ is a coboundary. This condition fails for generic pair $(\p, \alpha)$, see
\cite{DFS_21}. 
If $\nu$ is singular with respect to the Lebesgue measure then 
\eqref{LioTimes} does not rule out that $\nu$ is polynomially mixing in the sense
that 
$\DS \int \psi(x) \EXP_x(\varphi(X_n))d\nu(x)-\int \psi(x) d\nu(x) \int \phi(x) d\nu(x) $
decays polynomially. Thus the rate of mixing in the generic symmetric Liouville 
environment remains an open question. On the other hand, Theorem \ref{theorem_liouvillle} shows that
the assumptions that $\alpha$ is Diophantine in Theorem \ref{theorem_rate} is necessary.

We note that a related results in the case of constant $\p$ are obtained
in \cite[\S 4.3]{D02}.

\end{remark}

\subsection*{Acknowledgements}
The authors are grateful to Agnieszka Zelerowicz and the anonymous referee for useful comments. The first author is grateful to Minsung Kim for providing some references.
The first author has been supported by the European Union Horizon 2020 research and innovation programme under the Marie Sklodowska-Curie Grant Agreement No. 101034413.
The second author has been supported by the NSF grant DMS 2246983.

\section{The proof of Theorem \ref{theorem_mixing}}



\noindent
We follow closely  the strategy of \cite[Section 9]{Dolgopyat_Goldsheid_21}, see also \cite[Section 6]{DDKN}. 
We shall use

\begin{proposition}
\label{PrErg}
If $\{X_n\}$ has unique stationary measure $\nu$ then $\forall \phi\in C(\cX)$, \;

$\DS \frac{1}{N}\sum_{n=1}^N \EXP_x(\phi(X_n))=\nu(\phi)$ as $N\to\infty$ uniformly in $x.$
\end{proposition}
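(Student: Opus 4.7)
The plan is to prove uniform Cesàro convergence by a standard compactness/contradiction argument, using unique ergodicity to force every weak$^*$ limit of empirical measures to coincide with $\nu$.

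First I would reformulate the statement operator-theoretically. Setting $A_N\phi(x) := \frac{1}{N}\sum_{n=1}^N T^n \phi(x) = \frac{1}{N}\sum_{n=1}^N \EXP_x\phi(X_n)$, the claim is that $A_N \phi \to \nu(\phi)$ uniformly on $\cX$. Note that each $A_N \phi \in C(\cX)$ because $T$ preserves $C(\cX)$ by continuity of $p(\cdot,\gamma)$ and of the $G$-action.

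Next I would argue by contradiction. Suppose there exist $\eps>0$, a sequence $N_k \to \infty$, and points $x_k \in \cX$ with $|A_{N_k}\phi(x_k) - \nu(\phi)| \ge \eps$. Associate to $(x_k, N_k)$ the empirical measure $\mu_k$ defined on $C(\cX)$ by
$$\mu_k(\psi) \;=\; \frac{1}{N_k} \sum_{n=1}^{N_k} T^n \psi(x_k), \qquad \psi \in C(\cX).$$
Each $\mu_k$ is a Borel probability measure on the compact space $\cX$, so by the Banach--Alaoglu theorem I may pass to a subsequence (not renamed) along which $\mu_k \to \mu$ in the weak$^*$ topology, where $\mu$ is some Borel probability measure on $\cX$.

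The key step is to verify that $\mu$ is stationary. For any $\psi \in C(\cX)$, a telescoping computation gives
$$\mu_k(T\psi) - \mu_k(\psi) \;=\; \frac{1}{N_k}\bigl(T^{N_k+1}\psi(x_k) - T\psi(x_k)\bigr),$$
whose absolute value is bounded by $\tfrac{2\|\psi\|_\infty}{N_k} \to 0$. Passing to the limit, $\mu(T\psi) = \mu(\psi)$ for every $\psi \in C(\cX)$, i.e. $\mu$ is stationary. Unique ergodicity then forces $\mu = \nu$, so in particular $\mu_k(\phi) \to \nu(\phi)$. But $\mu_k(\phi) = A_{N_k}\phi(x_k)$, contradicting $|A_{N_k}\phi(x_k) - \nu(\phi)| \ge \eps$. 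This contradiction yields the uniform convergence.

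There is no real obstacle here; the only delicate point to check carefully is that the remainder $\mu_k(T\psi) - \mu_k(\psi)$ in the stationarity test truly vanishes (it does, being a boundary term of size $O(1/N_k)$), and that compactness of $\cX$ really does give weak$^*$ relative compactness of the $\mu_k$, which is where the hypothesis on $\cX$ enters. The laziness assumption $p(x,\id)\ge \eps_0$ plays no role in this proposition and will only be invoked later in the proof of Theorem~\ref{theorem_mixing} itself.
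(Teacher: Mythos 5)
Your proof is correct, and it is the canonical argument for this standard fact: the paper states Proposition~\ref{PrErg} without proof, and the contradiction argument via weak$^*$ limits of the empirical measures $\mu_k$, the telescoping bound $|\mu_k(T\psi)-\mu_k(\psi)|\le 2\|\psi\|_\infty/N_k$, and unique ergodicity is exactly what one is expected to supply. The only (harmless) pedantic points are that sequential weak$^*$ compactness uses separability of $C(\cX)$ (available since $\cX$ is compact metric) together with the Riesz representation of the positive normalized limit functional, and that passing to the limit in $\mu_k(T\psi)$ uses the Feller property $T\psi\in C(\cX)$, which you correctly flag.
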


Given $X$ we consider the accelerated walk $\tX$ obtained from $X$ by erasing all repetitions 
(i.e. the points $X_n$ s.t. $X_n=X_{n-1}$). Given a segment $W$ of the accelerated walk
(i.e. a finite sequence $x_0, x_1, \dots, x_k$ such that $x_j\in (\Gamma\setminus \id) x_{j-1}$)
we let $t_W$ be the time it takes the walker to traverse $W$ (given that she takes that path). 
Thus $\DS t_W=\sum_{w\in W} \ell_w$, where $(\ell_w)_{w\in W}$ are independent and each $\ell_w$, $w\in W$, has geometric distribution  with parameter
$1-p(w, \id).$ Let $\DS T_W=\EXP_W(t_W)=\sum_{w\in W} \frac{1}{1-p(w, \id)}.$
 Let $\tau$ be the first time when $T_{\tX(1, \tau)}\geq \varepsilon_0 n/2$, where $\tX(1, k)$ stands for $(\tX_i)_{0\le i\le k}$. Thus the accelerated walk 
$\tX(1, \tau)$ belongs to the set 
$\fS_n(x)$ of accelerated walk segments starting at $x$ and such that $T_W\geq \varepsilon_0 n/2$ but 
$T_{\bar W}<\varepsilon_0 n/2$ for each prefix $\bar W\subset W.$ 
By Proposition \ref{moderate_deviations}, we have $\Prob_W(t_W>n)=O(\exp(-c(\log n)^2))$ for some $c>0$ and any segment $W$.
Therefore
$$ \EXP_x(\phi(X_n))
= \sum_{W\in \fS_n} \EXP_x\big((\phi(X_n)\mathds{1}\{ \tX(1, \tau) = W \}\big)
$$
$$=\sum_{W\in \fS_n} 
\sum_{k=1}^n \Prob_x (\tX(1, \tau)=W) 
\Prob_W(t_W=k) \EXP_{e(W)}(\phi(X_{n-k})) + O(\exp(-c(\log n)^2)),$$
where $e(W)$ is the endpoint of $W$.

We claim that $\forall \phi\in C(\cX)$ $\forall \eps>0$ $\exists n_0$ such that $\forall n\geq n_0$
$\forall W\in \fS_n$ we have

\begin{equation}
\label{FixedStart}
\left|\sum_{k=1}^n \Prob_x (\tX(1, \tau)=W) 
\left[\Prob_W(t_W=k) \EXP_{e(W)}(\phi(X_{n-k}))-\nu(\phi)\right]\right|\leq \eps. 
\end{equation}
Summing \eqref{FixedStart} over $W\in \fS_n$ we obtain the theorem. It remains to prove
\eqref{FixedStart}.

We use the following result. Let $\DS \sigma_W^2=\sum_{w\in W} \frac{p(w, \id)}{[1-p(w, \id)]^2}$ be the 
variance of $t_W.$ Since we assumed there exists $\eps_0$ such that $\eps_0 \leq p(x, \id) \leq 1-\eps_0$ for every $x\in \cX$, there exist $c_1<c_2$ such that $c_1 n\leq \sigma_W^2\leq c_2 n$
for all $W\in \fS_n.$

\begin{proposition}
\label{PrLLT}
(\cite{D-MD}).\;
We have $\DS \Prob_W\left(t_W=\varepsilon_0 n/2+j\right)=\frac{1}{\sqrt{2\pi} \; \sigma_W} e^{-j^2/(2\sigma_W^2)}+
o\left(\sigma_W^{-1}\right),$ where $o\left(\sigma_W^{-1}\right)$ decays uniformly in $j\in \mathbb{Z}$.
\end{proposition}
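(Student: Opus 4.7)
The plan is to prove Proposition \ref{PrLLT} by the classical Fourier inversion argument for the local central limit theorem, taking advantage of the particular structure of the summands. Fix $W\in\fS_n$ and observe that $t_W=\sum_{w\in W}\ell_w$ is a sum of $|W|\asymp n$ independent $\mathbb{Z}_{\geq 1}$-valued random variables, each $\ell_w$ being geometric with parameter $q_w=1-p(w,\id)\in[\eps_0,1-\eps_0]$. In particular, each $\ell_w$ has moments of all orders uniformly bounded, and its characteristic function $\varphi_w(\theta)=q_w e^{i\theta}/(1-(1-q_w)e^{i\theta})$ satisfies $|\varphi_w(\theta)|<1$ on $(-\pi,\pi)\setminus\{0\}$. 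Writing $\mu_W=\EXP_W(t_W)=T_W$ (so $\mu_W=\eps_0 n/2+O(1)$ for $W\in\fS_n$), Fourier inversion on the integer lattice gives
$$\Prob_W(t_W=m)=\frac{1}{2\pi}\int_{-\pi}^{\pi} e^{-im\theta}\prod_{w\in W}\varphi_w(\theta)\,d\theta,$$
and the task is to compare this to $(2\pi\sigma_W^2)^{-1/2}\exp(-(m-\mu_W)^2/(2\sigma_W^2))$.

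I would split the integral into three ranges using a parameter $\delta_n\to\infty$ with $\delta_n/\sigma_W\to 0$ (e.g.\ $\delta_n=\sqrt{\log n}$). On the central range $|\theta|\leq\delta_n/\sigma_W$, a third-order Taylor expansion of $\log\varphi_w$ around $\theta=0$, summed over $w\in W$, yields
$$\prod_{w\in W}\varphi_w(\theta)=\exp\!\bigl(i\mu_W\theta-\tfrac12\sigma_W^2\theta^2+O(n|\theta|^3)\bigr),$$
and writing this as the Gaussian characteristic function times $(1+O(n|\theta|^3))$ produces, after inverse Fourier transform, the Gaussian main term up to an error $o(\sigma_W^{-1})$; the tails of the Gaussian beyond $|\theta|>\delta_n/\sigma_W$ are negligible because $\delta_n\to\infty$. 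On the intermediate range $\delta_n/\sigma_W<|\theta|\leq\delta_0$, the pointwise estimate $|\varphi_w(\theta)|\leq\exp(-c\theta^2)$ (valid uniformly for $|\theta|\leq\delta_0$ and some small $\delta_0>0$) yields $\prod_w|\varphi_w(\theta)|\leq e^{-c\sigma_W^2\theta^2}$, and integrating the Gaussian tail from $\delta_n/\sigma_W$ outward gives a contribution of order $e^{-c\delta_n^2}/\sigma_W=o(\sigma_W^{-1})$. On the tail range $\delta_0\leq|\theta|\leq\pi$, the uniform bound $|\varphi_w(\theta)|\leq\rho<1$ gives $|\prod_w\varphi_w(\theta)|\leq\rho^{|W|}$, exponentially small in $n$.

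The main obstacle is making every implicit constant (the coefficient $c$, the radius $\delta_0$, the spectral gap $\rho$, the moment and remainder bounds in the Taylor step) uniform across all $w$ and all $W\in\fS_n$. This is where the hypothesis $\eps_0\leq p(x,\id)\leq 1-\eps_0$ is essential: it forces $q_w\in[\eps_0,1-\eps_0]$, so $\{\varphi_w\}$ is a family parametrized by a compact subset of $(0,1)$, from which one extracts all the uniform bounds above. Uniformity in $j$ (equivalently in $m$) is automatic, since $m$ enters the integrand only via the unit-modulus phase $e^{-im\theta}$ and every bound above is on the modulus of the integrand; finally, the shift between the statement's center $\eps_0 n/2$ and the true mean $\mu_W$ changes the Gaussian density by $O(\sigma_W^{-2})$ uniformly in $j$ and hence can be absorbed into the $o(\sigma_W^{-1})$ error.
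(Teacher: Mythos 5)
Your argument is correct and is the standard Fourier-inversion proof of the local limit theorem for sums of independent, non-identically distributed lattice variables; the paper does not prove Proposition \ref{PrLLT} itself but cites \cite{D-MD} for it, and the two characteristic-function bounds you rely on (a Gaussian bound $|\varphi_w(\theta)|\le e^{-c\theta^2}$ near the origin and a uniform bound $\rho<1$ away from it, both uniform thanks to $q_w\in[\eps_0,1-\eps_0]$) are exactly the estimates \eqref{CharGeom1}--\eqref{CharGeom2} that the paper establishes in Appendix A for the closely related Propositions \ref{decay_delta} and \ref{moderate_deviations}. Your handling of the recentering from $T_W$ to $\eps_0 n/2$ (using $T_W=\eps_0 n/2+O(1)$ by the definition of $\fS_n$) is also correct, so no gaps to report.
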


Divide $\mathbb{Z}$ into intervals $\{I_s\}$ of length $\delta \sqrt{n}$ for small $\delta$
then 
$$
 \sum_{\{j: \varepsilon_0n/2+j\in [1,n] \}} \Prob_W(t_W=\varepsilon_0 n/2+j) \EXP_{e(W)} (\phi(X_{n-(\varepsilon_0 n/2+j)}))
 $$$$
 =
\left[\sum_{\{s: I_s\subseteq [1,n]\}} \Prob_W(t_W\in I_s) \frac{1}{|I_s|} \sum_{m\in I_s} \EXP_{e(W)} (\phi(X_{n-m}))\right]+o_{\delta\to 0}(1)+o_{n\to\infty}(1)
$$
\begin{equation}\label{basic_eq}
=\left[\sum_s \Prob_W(t_W\in I_s) \left(\nu(\phi)+o_{n\to\infty}(1)\right)\right]+o_{\delta\to 0}(1)
=\nu(\phi)+o_{n\to\infty}(1)+o_{\delta\to 0}(1)
\end{equation}
where the first equality follows by Proposition \ref{PrLLT} and the second equality follows from 
Proposition \ref{PrErg}. Thus \eqref{FixedStart} follows and the theorem is proven.


\section{The proof of Theorem \ref{theorem_poisson}}

In the course of the proof we shall need the following lemmata.
\begin{lemma}\label{lemma_product_norm}
  $\| f g\|_{C^r} \le 2^r \|f\|_{C^r} \|g \|_{C^r}$ for every $r\ge 1$ and $f, g \in C^r$.
\end{lemma}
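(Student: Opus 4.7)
The plan is to use the Leibniz rule for higher derivatives together with the trivial bound $\|f^{(j)}\|_\infty \le \|f\|_{C^r}$ whenever $j\le r$. Specifically, for each $0\le k\le r$, the Leibniz formula gives
$$(fg)^{(k)} = \sum_{j=0}^{k} \binom{k}{j} f^{(j)} g^{(k-j)},$$
so taking sup norms and bounding each factor by the full $C^r$ norm of $f$ or $g$ yields
$$\|(fg)^{(k)}\|_\infty \le \left(\sum_{j=0}^{k}\binom{k}{j}\right) \|f\|_{C^r}\|g\|_{C^r} = 2^k \|f\|_{C^r}\|g\|_{C^r}.$$

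Since $2^k\le 2^r$ for $0\le k\le r$, taking the maximum over $k$ of the left-hand side produces the claimed inequality $\|fg\|_{C^r}\le 2^r\|f\|_{C^r}\|g\|_{C^r}$. There is no real obstacle here: the estimate is entirely mechanical once Leibniz is invoked, and the binomial identity $\sum_{j=0}^k \binom{k}{j}=2^k$ is exactly what produces the factor $2^r$ in the final bound. One could get a slightly sharper constant by keeping $2^k$ instead of $2^r$ in each coordinate, but that is not what the statement asks for.
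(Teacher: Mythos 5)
Your proof is correct and is essentially the same argument as the paper's: the paper proves the bound by induction on $r$, writing $\|fg\|_{C^r}=\max\{\|(fg)'\|_{C^{r-1}},\|fg\|_\infty\}$ and applying the product rule once per step, which is just the inductive form of the Leibniz expansion you invoke directly. Both versions produce the constant $2^r$ from the same binomial count, so there is nothing to add.
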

\begin{proof}
For $r=0$ the statement is obvious. For $r>1$
$$\| fg \|_{C^r}
= \max \big\{ \|(fg)'\|_{C^{r-1}}, \|f g\|_\infty \big\}
$$
$$
\le \max \big\{ 2^{r-1} \big( \|f'\|_{C^{r-1}} \|g\|_{C^{r-1}}+ \| f\|_{C^{r-1}} \|g'\|_{C^{r-1}} \big), \|f\|_\infty \|g\|_\infty \big\}.
$$
The assertion follows since all the norms $\|f\|_\infty$, $\|f\|_{C^{r-1}}$, $\|f'\|_{C^{r}}$ are bounded by $\|f\|_{C^{r}}$ and the same is true for $g$.
\end{proof}

\begin{lemma}\label{lemma_cohomological}
Let $\alpha$ be Diophantine of type $(c,\tau)$, $m_0$ the lowest integer with $m_0>\tau+1$. For every $r\ge 0$ there exists a constant $A_r$ such that for every $\psi\in C^{r+m_0}(\T)$ with $\int_{\T} \psi(x)dx=0$ the solution $\varphi$ of the cohomological equation $\varphi(x+\alpha)-\varphi(x)=\psi(x)$, $x\in\T$, is $C^{r}$ and $\|\varphi\|_{C^r}\le A_r \| \psi \|_{C^{r+m_0}}$.
\end{lemma}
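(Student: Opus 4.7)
The plan is to construct the solution as a Fourier series and use the Diophantine condition to control the resulting small divisors. Since $\int_\T\psi\,dx=0$, we have $\hat\psi(0)=0$, so it is natural to set
\[
\hat\varphi(0)=0,\qquad \hat\varphi(k)=\frac{\hat\psi(k)}{e^{2\pi ik\alpha}-1}\quad(k\neq 0),
\]
so that $\varphi(x)=\sum_k\hat\varphi(k)e^{2\pi ikx}$ formally satisfies $\varphi(x+\alpha)-\varphi(x)=\psi(x)$. Everything reduces to showing that this series and its first $r$ derivatives converge absolutely and uniformly, with the asserted bound.

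Two ingredients drive the analysis. First, the Diophantine hypothesis gives $|e^{2\pi ik\alpha}-1|\ge 4\,\mathrm{dist}(k\alpha,\mathbb{Z})\ge C|k|^{-(1+\tau)}$ and hence the small-divisor estimate $|\hat\varphi(k)|\le C|k|^{1+\tau}|\hat\psi(k)|$. Second, the regularity $\psi\in C^{r+m_0}$, together with $(2\pi ik)^{r+m_0}\hat\psi(k)=\widehat{\psi^{(r+m_0)}}(k)$ and Parseval, yields
\[
\sum_{k\neq 0}|k|^{2(r+m_0)}|\hat\psi(k)|^2\;\le\; C\,\|\psi\|_{C^{r+m_0}}^2.
\]
For each $0\le j\le r$, the Weierstrass $M$-test gives $\|\varphi^{(j)}\|_\infty\le C\sum_{k\neq 0}|k|^{j+1+\tau}|\hat\psi(k)|$, and applying Cauchy--Schwarz to the splitting $|k|^{j+1+\tau}|\hat\psi(k)|=|k|^{-(r+m_0-j-1-\tau)}\cdot|k|^{r+m_0}|\hat\psi(k)|$ bounds this by $C\|\psi\|_{C^{r+m_0}}$, provided the weight series $\sum_{k\neq 0}|k|^{-2(r+m_0-j-1-\tau)}$ converges. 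Taking the maximum over $j\le r$ gives the stated inequality with a constant $A_r$ depending on $r$, $\tau$, and $c$.

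The delicate step is the exponent book-keeping at the worst case $j=r$: the crude Cauchy--Schwarz above needs $m_0>\tfrac{3}{2}+\tau$, whereas the hypothesis is only $m_0>1+\tau$. This half-derivative gap is closed by the classical sharpening in which the worst-case small-divisor bound $|e^{2\pi ik\alpha}-1|\asymp|k|^{-(1+\tau)}$ is attained only on a sparse set of resonant $k$ (essentially multiples of denominators of convergents of $\alpha$), while off this set one has the much softer bound $\mathrm{dist}(k\alpha,\mathbb{Z})\ge c|k|^{-1}$. Decomposing the Fourier sum into resonant and non-resonant pieces and handling each by dyadic blocks tied to the continued fraction expansion of $\alpha$ replaces the required condition by $m_0>1+\tau$, precisely the integer rounding built into the definition of $m_0$, and produces the claimed bound.
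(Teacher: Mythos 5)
Your proposal is correct and follows essentially the same route as the paper, whose entire proof consists of writing the formal Fourier solution and citing the standard $C^r$ small-divisor estimates (de la Llave, top of p.~26). You in fact go further than the paper by flagging the genuine delicacy: the term-by-term bound (or even your Cauchy--Schwarz/Parseval version) only yields the lemma under $m_0>\tfrac{3}{2}+\tau$, and the stated hypothesis $m_0>1+\tau$ does require the R\"ussmann-type refinement exploiting the sparsity of near-resonances. One caution about your sketch of that refinement: it is not literally true that $\mathrm{dist}(k\alpha,\mathbb{Z})\ge c|k|^{-1}$ for all non-resonant $k$; the correct mechanism is that for $k$ ranging over any block of fewer than $q_{n+1}$ consecutive integers ($q_n$ the convergent denominators) the points $k\alpha \bmod 1$ are $(2q_{n+1})^{-1}$-separated, whence $\sum_{2^j\le k<2^{j+1}}\|k\alpha\|^{-1}=O\bigl(j\,2^{j(1+\tau)}\bigr)$ using the Diophantine bounds $\min_k\|k\alpha\|\ge c\,2^{-j(1+\tau)}$ and $q_{n+1}\le c^{-1}q_n^{1+\tau}$; combining this with $|\hat\psi(k)|\le C\|\psi\|_{C^{r+m_0}}|k|^{-(r+m_0)}$ and summing $j\,2^{j(1+\tau-m_0)}$ over dyadic blocks converges precisely when $m_0>1+\tau$, as required.
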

\begin{proof}
It suffices to write down the formal solution $\varphi$ in terms of Fourier series and use the estimates on the $C^r$ norms, see e.g \cite{Llave}, the top of p.26. 
\end{proof}

\begin{proof}

\noindent\textbf{I. Symmetric case}

 By Lemma \ref{lemma_cohomological} under the above assumptions the cohomological equation
$$\frac{\mathfrak{p}(x)}{\mathfrak{q}(x)}=\frac{g(x+\alpha)}{g(x)}$$
possesses a positive solution $g\in C^{r+2m_0}(\mathbb{T})$. Then $\frac{g(x)}{\mathfrak{q}(x)}$ is the unique invariant density up to a multiplication by a constant (see \cite{Sinai_99}). Since $\nu(\psi)=0$ we have
$$\int_\mathbb{T} \psi(x)\frac{g(x)}{\mathfrak{q}(x)}dx=0,$$
and therefore Lemma \ref{lemma_cohomological} implies the equation
$f(x+\alpha)-f(x) = g(x)\psi(x)/\q(x)$ has a $C^{r+m_0}$ solution $f$. Since $g>0$, this solution can be written as $f(x)=g(x)\eta(x)$ for some $\eta\in C^{r+m_0}(\T)$. Thus $\eta$ satisfies
$$g(x+\alpha)\eta(x+\alpha) - g(x)\eta(x) = \frac{g(x)\psi(x)}{\q(x)}.$$ 
Since for any $\eta$ and any $c\in\mathbb{R}$ the function $\eta(x)+\frac{c}{g(x)}$ is a $C^{r+m_0}$ solution of the above equation as well, $\eta$ can be chosen so that $\int_{\mathbb{T}} \eta(x)dx=0$. By Lemmata \ref{lemma_cohomological} and \ref{lemma_product_norm}
$$
\|\eta\|_{C^{r+m_0}}
 = \bigg\|g\eta\cdot \frac{1}{g}\bigg \|_{C^{r+m_0}} 
\le 2^{r+m_0} \bigg\|\frac{1}{g} \bigg\|_{C^{r+m_0}} \cdot \|g\eta \|_{C^{r+m_0}}
$$
$$
\le  2^{r+m_0} \bigg\|\frac{1}{g}\bigg \|_{C^{r+m_0}} A_{r+m_0} \bigg\| \frac{g\psi}{\q}   \bigg\|_{C^{r+2m_0}}
\le  2^{2r+3m_0} A_{r+m_0} \bigg\|\frac{1}{g}\bigg \|_{C^{r+m_0}}  \bigg\| \frac{g}{\q}   \bigg\|_{C^{r+2m_0}} \|\psi\|_{C^{r+2m_0}} .
$$

Now, let $\varphi\in C^{r}$ be a solution of
$$\varphi(x) - \varphi(x-\alpha) = \eta(x).$$
By Lemma \ref{lemma_cohomological} and the above estimates $\|\varphi\|_{C^r} \le A \|\psi \|_{C^{r+2m_0}}$ for some constant $A$ independent of the choice of $\psi\in C^{r+2m_0}$.

We claim that $\varphi$ solves the Poisson equation. Indeed,
$$T\varphi(x)-\varphi(x)=\mathfrak{p}(x)\varphi(x+\alpha) + \mathfrak{q}(x)\varphi(x-\alpha) - \varphi(x) 
$$
$$
= \mathfrak{p}(x) \big[\varphi(x+\alpha) - \varphi(x) \big] - \mathfrak{q}(x)\big[\varphi(x)-\varphi(x-\alpha)\big]$$
$$
=\mathfrak{q}(x) \bigg[ \frac{\mathfrak{p}(x)}{\mathfrak{q}(x)} \big[ \varphi(x+\alpha) - \varphi(x) \big] - \big[ \varphi(x)-\varphi(x-\alpha) \big]$$
$$= \mathfrak{q}(x) \bigg( \frac{g(x+\alpha)}{g(x)}\eta(x+\alpha) - \eta(x) \bigg)$$
$$= \frac{\mathfrak{q}(x)}{g(x)} \bigg( g(x+\alpha)\eta(x+\alpha) - g(x)\eta(x) \bigg)=\psi(x).$$
\\

\noindent\textbf{II. Asymmetric case}
Let $\lambda=\exp\int_{\mathbb{T}}\log\frac{\mathfrak{p}(x)}{\mathfrak{q}(x)}dx$. Let us the assume $\lambda>1$. In the asymmetric case the invariant density (see \cite{Sinai_99}) is $\frac{\eta(x)g(x)}{\mathfrak{p}(x)}$, where $g$ solves
$$\frac{\mathfrak{p}(x)}{\mathfrak{q}(x)} = \lambda \frac{g(x)}{g(x-\alpha)},$$
and $\eta$ solves
$$\lambda^{-1}\eta(x+\alpha)-\eta(x)=\frac{1}{g(x)}.$$
Observe both $g$, $\eta$ are $C^{r+m_0}$ and $g$ is positive.

The function $\DS \kappa(x)=\sum_{k=0}^\infty \frac{g(x-k\alpha) \psi(x-k\alpha)}{\p(x-k\alpha)}\lambda^{-k}$ solves the equation
\begin{equation}
\lambda \kappa(x)-\kappa(x-\alpha)= \frac{\lambda g(x)\psi(x)}{\mathfrak{p}(x)}.
\end{equation} 
It is clear that $\kappa$ is $C^{r+m_0}$ and $\| \kappa \|_{C^{r+m_0}} \le \widetilde{A} \| \psi \|_{C^{r+m_0}}$ for some constant $\widetilde{A}$ independent of $\psi\in C^{r+m_0}$. Observe $\int_{\mathbb{T}} \frac{\kappa(x)}{g(x)}dx=0$. Indeed, by the definition of $\mr$ we have
$$\int_\mathbb{T} \frac{\kappa(x)}{g(x)}dx = \int_\mathbb{T} \kappa(x)\mr(x+\alpha)\lambda^{-1}dx - \int_\mathbb{T} \kappa(x)\mr(x)dx $$
$$
=\sum_{k=0}^\infty  \int_\mathbb{T}\frac{g(x-k\alpha) \psi(x-k\alpha)}{\p(x-k\alpha)}\lambda^{-(k+1)}\mr(x+\alpha)dx 
- \sum_{k=0}^\infty \int_\mathbb{T} \frac{g(x-k\alpha) \psi(x-k\alpha)}{\p(x-k\alpha)}\lambda^{-k}\mr(x)dx.
$$
Since the Lebesgue measure is rotation invariant we get 
$$\int_\mathbb{T} \frac{\kappa(x)}{g(x)}dx= \sum_{k=1}^\infty \int_\mathbb{T} \frac{g(x-k\alpha) \psi(x-k\alpha)}{\p(x-k\alpha)}\lambda^{-k}\mr(x)dx
-\sum_{k=0}^\infty \int_\mathbb{T} \frac{g(x-k\alpha) \psi(x-k\alpha)}{\p(x-k\alpha)}\lambda^{-k}\mr(x)dx
$$
$$= -\int_\mathbb{T} \frac{g(x)\mr(x)}{\p(x)}\psi(x)dx=0,$$
where the last step follows from the assumption that $\psi$ is centered. Since $\int_\mathbb{T} \frac{\kappa(x)}{g(x)}dx=0$ there exists $\varphi\in C^{r}(\mathbb{T})$ with 
$$\varphi(x+\alpha)-\varphi(x)=\kappa(x)/g(x).$$
For the same reasons as in the symmetric case and the estimates on the norm of $\kappa$, $\|\varphi\|_{C^r} \le A \| \psi \|_{C^{r+m_0}}$ for some $A$ independent of $\psi\in C^{r+m_0}$.

We claim that $\varphi$ solves the Poisson equation. Indeed,
$$T\varphi(x)-\varphi(x)= \q(x)\bigg[ \frac{\p(x)}{\q(x)}(\varphi(x+\alpha) -\varphi(x)) - (\varphi(x)- \varphi(x-\alpha)) \bigg]$$
$\DS
=\frac{\q(x)}{g(x-\alpha)} \bigg[ \lambda g(x) \frac{\kappa(x)}{g(x)} - g(x-\alpha) \frac{\kappa(x-\alpha)}{g(x-\alpha)} \bigg]
=\frac{\q(x)}{g(x-\alpha)} \big( \lambda \kappa(x) - \kappa(x-\alpha)\big) = \psi(x).
$
\end{proof}

\section{The proof of Theorem \ref{theorem_rate}}

The proof below is for the symmetric case. The asymmetric case is an obvious modification.

Fix $x\in \T$ and let $\widetilde{T} = T^2$ be the transition operator of (\ref{walk_def}) restricted to even times. It is clear from the proof of Theorem \ref{theorem_mixing} applied to the process associated to $\widetilde{T}$ that the assertion follows if the left-hand side of \eqref{FixedStart} with $T$ replaced by $\widetilde{T}$ decays faster than $A \| \psi \|_{C^r}  n^{-k/2} \ln n$ for some constant $A$ independent of $\psi$. Recall that $W\in\mathbb{S}_n(x)$ is a segment of an accelerated walk and $t_W$ is the time it takes the walker to traverse $W$. As explained before $t_W$ is a sum of random variables with geometric distributions and parameters uniformly separated from $0$ and $1$. Fix $n$ large and define inductively $\delta_j^0= \Prob( t_W = n-j)$, $j=0,1,\ldots, n-1$ and $\delta_j^m = \delta_{j-1}^{m-1} - \delta_{j} ^{m-1}$, $j=m, m+1, \cdots, n-1$, $m=1,\ldots, n-2 $.

Let $k=1$ and $r = 6km_0$. By Theorem \ref{theorem_poisson} there exists $\widetilde{A}$ such that for each $\psi\in C^{r}(\T)$ there exists $\varphi \in C^{r-6m_0}(\T)$ with
\begin{equation}\label{poisson_1}
\widetilde{T}\varphi - \varphi = T^2 \varphi - \varphi = \psi
\end{equation}
and $\| \varphi \|_{C^{r-6m_0}} \le \widetilde{A} \| \psi \|_{C^r}$. Indeed, if $\hat{\phi}$ solves $T\hat{\phi} - \hat{\phi}=\psi$ then the solution of $T \varphi - \varphi = \hat{\phi}$ solves also (\ref{poisson_1}). The solution $\varphi$ clearly satisfies $\wT^{j}\psi = \wT^{j+1} \varphi - \wT^j \varphi$ for every $j \ge 0$. 
Using Proposition \ref{moderate_deviations} from the appendix
and the bound $\| \wT^j \varphi \|_\infty \le \|\varphi \|_{C^{r-6m_0}} \le \widetilde{A} \|\psi\|_{C^r}$ 
we can estimate the left-hand side of \eqref{FixedStart} as follows:
$$
\sum_{j=0}^{n-1} \Prob_W (t_W= n-j) \wT^j \psi (e(W))
= \sum_{j=0}^{n-1} \Prob_W (t_W= n-j) \big( \wT^{j+1} \varphi (e(W)) - \wT^j \varphi (e(W)) \big)
$$
\begin{equation}\label{abel_summation1}
= \wT^n \varphi(e(W)) \Prob_W(t_W=1) - \varphi(e(W)) \Prob_W (t_W= n)
+\sum_{j=1}^{n-1} \wT ^j \varphi (e(W)) \delta_j^1
\end{equation}
$$
=\sum_{\{j: |n-j-\varepsilon_0n/2|<\sqrt{n}\ln n\} } \wT ^j \varphi (e(W)) \delta_j^1+ \| \psi \|_{C^r} O(\exp(-c(\ln n)^2)).
$$
The second term decays faster than polynomially. 
By Proposition \ref{decay_delta} from the appendix, the first term is bounded by
$$
\bigg| \sum_{\{j: |n-j-\varepsilon_0n/2|<\sqrt{n}\ln n\} }  \wT ^j \varphi (e(W)) \delta_j^1 \bigg| \le 2\widetilde{A}\sqrt{n} \ln n \| \psi \|_{C^{r}} \max_{j} \delta_j^1  = O(n^{-1/2} \ln n) \| \psi \|_{C^{r}},
$$
which gives the assertion for $k=1$.

To show the assertion for $k=2$ we again use Theorem \ref{theorem_poisson} to find a function $\widetilde{\varphi}$ such that $\wT \widetilde{\varphi} - \widetilde{\varphi} = \varphi$ and $\| \widetilde{\varphi} \|_{C^{r-12m_0}} \le \widetilde{A}^2 \| \psi \|_{C^r}$ (it is possible since $r = 12m_0$ by the assumption). Then the second line of (\ref{abel_summation1}) (and thus \eqref{FixedStart}) can be rewritten as
$$
 \wT^n \varphi(e(W)) \Prob_W(t_W=1) - \varphi(e(W)) \Prob_W (t_W= n)
+\sum_{j=1}^{n-1} \wT ^j \varphi (e(W)) \delta_j^1
$$
$$
=  \wT^n \varphi(e(W)) \Prob_W(t_W=1) - \varphi(e(W)) \Prob_W (t_W= n)
+\sum_{j=1}^{n-1} \big( \wT ^{j+1} \widetilde{\varphi} (e(W)) - \wT^j \widetilde{\varphi}(e(W)) \big) \delta_j^1
$$
$$
= \wT^n \varphi(e(W)) \Prob_W(t_W=1) - \varphi(e(W)) \Prob_W (t_W= n)
$$
$$
+ \wT^n \widetilde{\varphi}(e(W)) \delta_{n-1}^1 - \wT \widetilde{\varphi}(e(W)) \delta_1^1 +\sum_{j=2}^{n-1} \wT^j\widetilde{\varphi} (e(W))  \delta_j^2.
$$
By exactly the same argument like for $k=1$ the above expression equals
$$
\sum_{\{j: |n-j-\varepsilon_0n/2|<\sqrt{n}\ln n\} } \wT ^j \varphi (e(W)) \delta_j^2+ \| \psi \|_{C^r} O(\exp(-c(\ln n)^2))),
$$
where the first term  
can be bounded by Proposition \ref{decay_delta}  as
$$
\bigg| \sum_{\{j: |n-j-\varepsilon_0n/2|<\sqrt{n}\ln n\} }  \wT ^j \varphi (e(W)) \delta_j^2 \bigg| \le 2\sqrt{n} \ln n \| \psi \|_{C^{r}} \max_{j} \delta_j^2  = O(n^{-1} \ln n) \| \psi \|_{C^{r}}.
$$
This completes the proof for $k=2$.

The claim for $k>2$ is obtained in exactly the same way by repeatedly
solving the Poisson equation and using higher order Abel summation.

\section{Slow mixing.}

Here we prove Theorem \ref{theorem_liouvillle}.

Denote by $G_q^+$ the set of points in $\T$ whose distance to the set 
$\DS \left\{0, \frac{1}{q}, \cdots, \frac{q-1}{q} \right\}$ is less than $\DS \frac{1}{16q}$
 and let $\DS G_q^-=G^+_q+\frac{1}{2q}.$
We shall need the following lemma.

\begin{lemma}\label{lemma_liouville}
If $\alpha\in \mathbb{R}$, $p, q\in \mathbb{Z}$, $\gamma\ge 2$ satisfy
$\DS
|q\alpha - p |\!\!<\!\! \frac{1}{16q^\gamma}
$
then 
$\EXP_x  \cos (2\pi q X_{\widetilde{q}})\!\!>\!\!\frac{\sqrt{2}}{2} $ for $x\in G_q^+$, where $\widetilde{q} = \lfloor q^{\gamma-1} \rfloor$.
 Likewise 
$\EXP_x  \cos (2\pi q X_{\widetilde{q}})\!\!<\!\!-\frac{\sqrt{2}}{2} $ for $x\in G_q^-$.
\end{lemma}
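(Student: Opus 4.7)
The key observation is that although the walk $(X_n)$ depends on $x$ through the transition probabilities, the spatial displacement is always an integer multiple of $\alpha$. Concretely, a path of length $\tilde{q}$ gives $X_{\tilde q}=x+S_{\tilde q}\alpha$ where $S_{\tilde q}\in\mathbb{Z}$ is the (random) net number of $+\alpha$ steps minus $-\alpha$ steps, so $|S_{\tilde q}|\le \tilde q\le q^{\gamma-1}$ deterministically.

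Writing $q\alpha=p+\varepsilon$ with $|\varepsilon|<1/(16q^\gamma)$, we get
\[
2\pi q X_{\tilde q}=2\pi qx+2\pi S_{\tilde q}p+2\pi S_{\tilde q}\varepsilon\equiv 2\pi qx+2\pi S_{\tilde q}\varepsilon\pmod{2\pi},
\]
since $S_{\tilde q}p\in\mathbb{Z}$. The perturbation term is bounded pathwise by
\[
|2\pi S_{\tilde q}\varepsilon|\le 2\pi\cdot q^{\gamma-1}\cdot\frac{1}{16q^\gamma}=\frac{\pi}{8q}\le \frac{\pi}{8}.
\]

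Now for $x\in G_q^+$, by definition there exists $k\in\{0,\dots,q-1\}$ with $|x-k/q|<1/(16q)$, so $|2\pi qx-2\pi k|<\pi/8$. Combining, for every realization of the walk there is an integer $k'$ with $|2\pi qX_{\tilde q}-2\pi k'|<\pi/4$, which gives the pathwise bound $\cos(2\pi qX_{\tilde q})>\cos(\pi/4)=\sqrt{2}/2$. Taking expectation preserves the strict inequality. For $x\in G_q^-$ the same computation applies with $k/q$ replaced by $k/q+1/(2q)$, shifting $2\pi qx$ to within $\pi/8$ of an odd multiple of $\pi$; then $2\pi qX_{\tilde q}$ lies within $\pi/4$ of an odd multiple of $\pi$, so $\cos(2\pi qX_{\tilde q})<\cos(3\pi/4)=-\sqrt{2}/2$ pathwise.

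There is no substantial obstacle: the whole argument is a deterministic pigeonhole-type bound using the Diophantine-like inequality on $q\alpha$, and the randomness of the walk enters only through the integer $S_{\tilde q}$, which is irrelevant modulo $1/q$ once multiplied by $q\alpha$. The only mild point to verify is that the constants $1/(16q^\gamma)$ and $1/(16q)$ defining the hypothesis and the sets $G_q^{\pm}$ conspire to give a total deviation of at most $\pi/4$, which is precisely what yields the threshold $\sqrt{2}/2$ in the statement.
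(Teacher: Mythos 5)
Your proof is correct and follows essentially the same argument as the paper: both bound the deterministic drift $|S_{\tilde q}(q\alpha-p)|\le q^{\gamma-1}\cdot\frac{1}{16q^{\gamma}}$ and combine it with the $\frac{1}{16q}$-closeness of $x$ to the grid $\{j/q\}$ to get a pathwise deviation under $\frac{1}{8q}$ (equivalently, an angle under $\pi/4$), whence the cosine bound holds surely and passes to the expectation.
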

\begin{proof}
Fix $p, q, \gamma, \alpha$ as in the statement. Clearly $X_{\widetilde{q}}$ started at $x\in\T$ can attain with positive probability only those points $x+n\alpha$ with 
$|n|\le  \tq$. If $x\in G^+_q$, then the distance of $x+n\alpha$ with $|n|\le \widetilde{q}$ to the set $\{0, 1/q, \cdots, (q-1)/q \}$ is less than $\frac{1}{16q}+ q^{\gamma-1}\cdot \frac{1}{16q^\gamma}< \frac{1}{8q}$. Since $\cos(2\pi q x)$ is $1/q$ periodic we get $\cos(2 \pi q X_{\widetilde{q}}) > \cos(\pi / 4) = \frac{\sqrt{2}}{2}$ a.s., which implies the first assertion, the second is similar.
\end{proof}

\begin{proof}[Proof of Theorem \ref{theorem_liouvillle}]
We are going to construct inductively a sequence $(\varphi_n)$ of functions of the form $a_n\cos(2\pi q_n x)$ in such a way that the sum $\DS \varphi=\sum_{n=1}^\infty \varphi_n$ satisfies the assertion.

By the assumptions for any $\gamma\ge 2$ there exists infinitely many $p, q$'s such that
\begin{equation}\label{l1}
|q\alpha - p | < \frac{1}{16q^\gamma}.
\end{equation}
Let $p_1, q_1$ be an arbitrary pair satisfying \eqref{l1} with $\gamma=2$. 
Let  $\widetilde{q_1} = \lfloor q_1^{\gamma-1} \rfloor$, 
$\varphi_1(x) =  q_1^{-\sqrt{2}} \cos (2\pi q_1 x)$. By Lemma \ref{lemma_liouville} 
either 
$\nu(\varphi_1)\leq 0$ and 
$\DS \EXP_x \varphi_1(X_{\widetilde{q}_1})-\nu(\varphi_1) > \frac{\sqrt{2}}{2} q_1^{-\sqrt{2}} $ for every $x\in G_{q_1}^+$ 
or 
$\nu(\varphi_1)\geq 0$ and 
$\DS \left|\EXP_x \varphi_1(X_{\widetilde{q}_1})-\nu(\varphi_1)\right| > \frac{\sqrt{2}}{2} q_1^{-\sqrt{2}} $ for every $x\in G_{q_1}^-.$


Assume $\varphi_1, \cdots, \varphi_{n-1}$ are already defined, $n\ge 1$. Let $p_n, q_n$ be such that (\ref{l1}) is satisfied for $\gamma = n+1$, and put $\hphi_n(x) = q_n^{-\sqrt{n+1}}\cos(2\pi q_n x)$, $\widetilde{q}_n = \lfloor q_n^{n} \rfloor$. We impose additional condition that $q_n$ is so large that  $q_n^{-\sqrt{n+1}}<0.001 q_{n-1}^{-\sqrt{n}} .$

To fix the notation consider the case where $\nu(\hphi_n)\geq 0.$
Then for all $x\in G_{q_n}^-$ we have
$\DS \left|\EXP_x(\hphi_n(X_{\widetilde{q}_n}))-\nu(\hphi)\right|\geq 
\frac{\sqrt{2}}{2} q_n^{-\sqrt{n+1}}.$

Denote $\DS \psi_n=\sum_{j=1}^n \varphi_j(x).$ Let
$H_n=\{x\in G_{q_n}^-: \EXP_x(\psi_{n-1} (X_{\tq_n}))\geq 
\nu(\psi_{n-1})+\frac{\sqrt{2}}{4} q_n^{-\sqrt{n+1}} \}.$

If $\Leb(H_n)\geq \Leb(G_{q_n}^-)/2$ then we set $\varphi_n=0$ and 
$\cG_n=H_n.$ 
If $\Leb(H_n)< \Leb(G_{q_n}^-)/2$ then we set $\varphi_n=\hphi_n$ and 
$\cG_n=G_{q_n}^-\setminus H_n.$ 
In either case, by Lemma \ref{lemma_liouville} on
$\cG_n$ we have 
\begin{equation}
\label{XNotMix}
\left|\EXP_x(\psi_{n} (X_{\tq_n}))-
\nu(\psi_{n})\right|\geq \frac{\sqrt{2}}{4} q_n^{-\sqrt{n+1}} 
=\frac{\sqrt{2}}{4} \tq_n^{\;\;-\frac{\sqrt{n+1}}{n}}.
\end{equation}

If $\nu(\hphi_n)<0$ then we proceed as above but will use $G_{q_n}^+$ instead of
$G_{q_n}^-$ to ensure  \eqref{XNotMix}.


Put $\DS \varphi=\sum_{n=1}^\infty \varphi_n$. Since  
$\DS \sum_{n=1}^\infty \varphi_n^{(j)}$ converges uniformly for each $j$ the function $\varphi$ is $C^\infty$. 

 Let $\cG=\limsup \cG_{n}$. Since  the Lebesgue 
measure of $\cG_{n}$ is uniformly bounded
from below, $\cG$ is necessarily of positive measure. Fix $x\in \cG$. Then for infinitely many $n$'s we have
$$
\left| \EXP_x \varphi(X_{\widetilde{q}_n})-\nu(\varphi)\right|
\ge \left| \EXP_x \psi_n(X_{\widetilde{q}_n})-\nu(\psi_n)\right|
-
\left|\sum_{j>n}
\EXP_x(\varphi_j(X_{\tq_n})-
\nu(\varphi_j) \right|  
$$
The last term is at most $0.003 q_n^{-\sqrt{n+1}}$ 
due to our choice of $n$
while the first term is at least $\frac{\sqrt{2}}{4} q_n^{-\sqrt{n+1}}$ 
by \eqref{XNotMix}. It follows that for infinitely many $n$
$$ \left| \EXP_x \varphi(X_{\widetilde{q}_n})-\nu(\varphi)\right|\geq 
0.3 q_n^{-\sqrt{n}}=0.3 \tq_n^{\;\;-\frac{\sqrt{n+1}}{n}}. $$
Since $\frac{\sqrt{n+1}}{n}$ tends to 0 as $n\to \infty$ and the above inequality holds for infinitely many $n$'s by the definition of $\cG$, 
$ \EXP_x \varphi(X_{n})-\nu(\varphi)$ decays slower than polynomially.
\end{proof}

\appendix
\section{Sums of geometric random variables.}

\begin{proposition}\label{decay_delta}
Let $(\ell_j)_{j\ge 1}$ be a sequence of independent random variables with geometric distributions with parameters $p_j$. Let us assume there exists $\varepsilon_0>0$ such that $\varepsilon_0< p_j < 1-\varepsilon_0$, $j\ge 1$. Denote $S_n= \ell_1+\cdots+\ell_n$ and define 
$\delta_{0, n}(j)=\Prob(S_n=j),$ $\delta_{m,n}=\nabla^m \delta_{0, n}$
where the operator $\nabla$ acts on sequences by
$(\nabla a)(j)=a(j+1)-a(j).$ Then
$\DS \sup_j |\delta_{m,n}(j)|\leq C_m n^{-(m+1)/2}. $
\end{proposition}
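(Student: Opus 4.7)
The approach I would take is Fourier analytic on $\mathbb{Z}$. Writing $\varphi_k(\theta)=\EXP(e^{i\theta\ell_k})$ and $\Phi_n(\theta)=\prod_{k=1}^n\varphi_k(\theta)$ for the characteristic function of $S_n$, Fourier inversion gives
$$
\delta_{0,n}(j)=\frac{1}{2\pi}\int_{-\pi}^{\pi} e^{-ij\theta}\,\Phi_n(\theta)\,d\theta,
$$
and since the forward difference $\nabla$ on sequences corresponds on the Fourier side to multiplication by $(e^{i\theta}-1)$, one has
$$
\delta_{m,n}(j)=\frac{1}{2\pi}\int_{-\pi}^{\pi} e^{-ij\theta}(e^{i\theta}-1)^m\,\Phi_n(\theta)\,d\theta.
$$
The goal then is to estimate this integral in absolute value, uniformly in $j$.

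The key step is a uniform Gaussian upper bound on $|\Phi_n|$. A direct computation for $\ell_k$ geometric with parameter $p_k$ yields
$$
|\varphi_k(\theta)|^2=\frac{p_k^2}{p_k^2+2(1-p_k)(1-\cos\theta)}.
$$
Combining this with the elementary inequality $1-\cos\theta\ge 2\theta^2/\pi^2$ on $[-\pi,\pi]$ and the hypothesis $\varepsilon_0<p_k<1-\varepsilon_0$, I would extract a constant $c>0$ depending only on $\varepsilon_0$ such that $|\varphi_k(\theta)|\le e^{-c\theta^2}$ for every $|\theta|\le\pi$ and every $k$. Taking the product then gives
$$
|\Phi_n(\theta)|\le e^{-cn\theta^2},\qquad |\theta|\le\pi,
$$
with $c$ independent of the sequence $(p_k)$. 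This uniformity is essential for the application in Theorem \ref{theorem_rate}, where the parameters $p_k=1-p(w,\id)$ vary along the walk.

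Finally, since $|e^{i\theta}-1|=2|\sin(\theta/2)|\le|\theta|$ on $[-\pi,\pi]$, the integral is bounded by
$$
|\delta_{m,n}(j)|\le \frac{1}{2\pi}\int_{-\pi}^{\pi}|\theta|^m e^{-cn\theta^2}\,d\theta \le n^{-(m+1)/2}\cdot\frac{1}{2\pi}\int_{-\infty}^{\infty}|u|^m e^{-cu^2}\,du,
$$
after the change of variable $u=\sqrt{n}\,\theta$, and the remaining Gaussian moment integral is a finite constant $C_m$ depending only on $m$ and $\varepsilon_0$. The only real obstacle is securing the uniform Gaussian control of the individual characteristic functions; once that is in hand, everything else reduces to a one-line rescaling of a Gaussian integral.
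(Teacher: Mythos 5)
Your proof is correct and follows essentially the same route as the paper: Fourier inversion, the observation that $\nabla^m$ becomes multiplication on the Fourier side by a factor of modulus $|e^{i\theta}-1|^m\le|\theta|^m$, a Gaussian bound $|\Phi_n(\theta)|\le e^{-cn\theta^2}$, and the rescaling $u=\sqrt{n}\,\theta$. The only point of divergence is how the Gaussian bound is obtained: the paper splits $[-\pi,\pi]$ into $|t|\le\delta$ (Taylor expansion of $\phi_k$, using the uniform lower bound on $\mathrm{Var}(\ell_k)$) and $\delta\le|t|\le\pi$ (where $|\phi_k|\le\theta<1$), whereas you extract a single global bound from the explicit formula $|\varphi_k(\theta)|^2=p_k^2/\bigl(p_k^2+2(1-p_k)(1-\cos\theta)\bigr)$ combined with $1-\cos\theta\ge 2\theta^2/\pi^2$ --- this does work, since $|\varphi_k(\theta)|^2\le\bigl(1+(4\varepsilon_0/\pi^2)\theta^2\bigr)^{-1}\le e^{-2c\theta^2}$ with $c$ depending only on $\varepsilon_0$ (using $\ln(1+x)\ge x/(1+x)$ and the boundedness of $x$ on $[-\pi,\pi]$), and it is slightly cleaner, at the price of leaning on the explicit geometric characteristic function where the paper's two-regime argument is the generic local-limit template.
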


\begin{proof}
Let $\phi_k(t)$ denote the characteristic function of $\ell_k\!\!-\!\!\EXP(\ell_k)$
and $\DS \Phi_n(t)\!\!=\!\!\prod_{k=1}^n \phi_k(t)$ be the characteristic function
of $S_n-\EXP(S_n).$ Then
$\DS \Prob(S_n\!\!=\!\!j)\!\!=\!\!\frac{1}{2\pi} \int_{-\pi}^{\pi} \Phi_n(t) e^{it(j+\EXP(S_n))} dt. $
So 
$$ 
 \delta_{m,n} (j)=\frac{1}{2\pi} \int_{-\pi}^{\pi} \Phi_n(t) e^{it(j+\EXP(S_n))} 
\left(e^{it}-1\right)^m dt .$$
Therefore 
\begin{equation}
\label{DeltamInt}
 \sup_j |\delta_{m,n} (j)|=\frac{1}{2\pi} 
 \int_{-\pi}^{\pi} |\Phi_n(t)| 
\left|e^{it}-1\right|^m dt 
\leq C \int_{-\pi}^{\pi} |\Phi_n(t)| |t|^m dt.
\end{equation}
We claim that given $\varepsilon_0$ as in the assumption of the proposition there are constants
$\delta, \kappa>0,$ and $\theta<1$ such that
\begin{equation}
\label{CharGeom1}
|\phi_k(t)|\leq e^{-\kappa t^2} \text{ for } |t|\leq \delta,
\end{equation}
and
\begin{equation}
\label{CharGeom2}
|\phi_k(t)|\leq \theta \text{ for }  \delta\leq |t|\leq \pi. 
\end{equation}
To check \eqref{CharGeom1}, 
note that  the Taylor series of $\phi_k(t)$ takes form
$$ \phi_k(t)=1-\frac{\mathrm{Var}(\ell_k^2 t^2)}{2}+\eps_k (t)t^3\quad\text{where}\quad |\eps_k(t)|\leq C \EXP(\ell_k^3) $$
for some constant $C$ independent of $k$ since under the assumptions of the proposition $\Var(\ell_k)$ is uniformly bounded from 
below while $\EXP(\ell_k^3)$ is uniformly bounded from above. 
Hence  if $\delta$ is sufficiently small, then for $|t|\!\!\leq\!\! \delta$ we have that $|\phi_k(t)|\!\!\leq\!\! 1-\kappa t^2\!\!\leq\!\! e^{-\kappa t^2}$ where 
$\DS \kappa=\inf_k \frac{\Var(\ell_k)}{4}$ and the last inequality relies on the fact that 
$1-s\leq e^{-s}$ for $s\geq 0.$

\eqref{CharGeom2} holds because for $\delta\leq |t|\leq \pi$
$$ |\phi_k(t)|=\left|\frac{p_k}{1-q_k e^{it}}\right|=\frac{p_k}{\sqrt{p_k^2+2q_k(1-\cos t)}}\leq \frac{1}{\sqrt{1+2\eta (1-\cos\delta)}} .
$$
Multiplying the above estimates we obtain that
$\DS  |\Phi_n(t)|\leq \begin{cases} e^{-\kappa n t^2}, &  |t|<\delta\\
\theta^n, & \delta\leq |t|\leq \pi. \end{cases}. $
Plugging this into \eqref{DeltamInt} we obtain
$$ \sup_j |\delta_{m,n} (j)|
\leq C\left[\theta^n+ \int_{-\delta}^{\delta} e^{-\kappa n t^2} t^m dt\right] .$$
The second term in the RHS is smaller than 
$\DS C \int_{-\infty}^{\infty} e^{-\kappa n t^2} t^m dt= O\big( n^{-(m+1)/2} \big)  $
as claimed.
\end{proof} 

\begin{proposition}\label{moderate_deviations}
Let $(\ell_j)_{j\ge 1}$ be a sequence of independent random variables with geometric distributions with parameters $p_j$ Assume that there exists $\varepsilon_0>0$ such that $\varepsilon_0< p_j < 1-\varepsilon_0$, $j\ge 0$. Denote $S_n= \ell_1+\cdots+\ell_n$ and for each $n$ set
$$
\tau = \min \bigg \{ k \ge 1 : \sum_{j=1}^k \frac{1}{p_j}>n/2 \bigg\}.
$$
Then there exists a constant $c>0$ such that
$$
\Prob \big( \big| S_\tau - n/2 \big|> \sqrt{n}\ln n \big) =O(\exp(-c(\ln n)^2)).
$$
\end{proposition}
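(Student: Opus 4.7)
The first key observation is that $\tau$ is deterministic: it depends only on $n$ and on the fixed parameters $(p_j)$, not on the random walk. Thus the proposition is really a moderate-deviations estimate for a sum of independent random variables, and the natural weapon is a Chernoff/MGF bound. The bounds $\eps_0 < p_j < 1-\eps_0$ yield $1 < 1/p_j < 1/\eps_0$, so the definition of $\tau$ forces $\tau \in (\eps_0 n/2,\, n/2 + 1]$ and
$$\EXP(S_\tau) = \sum_{j=1}^{\tau} \frac{1}{p_j} \in \left(\frac{n}{2},\; \frac{n}{2} + \frac{1}{\eps_0}\right].$$
In particular, for $n$ large the event $\{|S_\tau - n/2| > \sqrt{n}\ln n\}$ is contained in $\{|S_\tau - \EXP(S_\tau)| > \tfrac{1}{2}\sqrt{n}\ln n\}$, so it suffices to control concentration around the true mean.

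Next I would establish a uniform Gaussian-type bound on the moment generating function of each centred increment $Y_j := \ell_j - 1/p_j$. From the explicit formula
$$\EXP[e^{\theta \ell_j}] = \frac{p_j\, e^\theta}{1-(1-p_j)e^\theta}, \qquad \theta < -\log(1-p_j),$$
a Taylor expansion around $\theta = 0$ gives
$$\log \EXP[e^{\theta Y_j}] = \frac{\Var(\ell_j)}{2}\,\theta^2 + O(\theta^3),$$
with the cubic remainder uniform in $j$ because $p_j$ stays in the compact interval $[\eps_0, 1-\eps_0]$. Consequently there exist constants $\theta_0, K > 0$ depending only on $\eps_0$ such that $\EXP[e^{\theta Y_j}] \le e^{K\theta^2}$ for all $|\theta| \le \theta_0$ and all $j$.

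Independence and Markov's inequality then give the one-sided Chernoff bound
$$\Prob\bigl(S_\tau - \EXP(S_\tau) > t\bigr) \le \exp\bigl(-\theta t + K\tau\theta^2\bigr), \qquad 0 < \theta \le \theta_0,$$
with a matching bound for the lower tail obtained by replacing $\theta$ by $-\theta$. Setting $t = \tfrac{1}{2}\sqrt{n}\ln n$ and $\tau \le n$, and optimising in $\theta$ at $\theta^* = t/(2K\tau) = O(\ln n/\sqrt{n})$ (which lies in $(0,\theta_0]$ once $n$ is large), yields $\exp(-t^2/(4K\tau)) \le \exp(-c(\ln n)^2)$ for some $c>0$ depending only on $\eps_0$. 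A union bound over the two tails then gives the claim.

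The only subtlety is verifying that the $O(\theta^3)$ constant in the MGF expansion is uniform in $j$; this reduces to checking that the third derivative of $\theta \mapsto \log \EXP[e^{\theta \ell_j}]$ is bounded uniformly on $[-\theta_0,\theta_0]$, which follows at once from the rational expression above since $p_j \in [\eps_0, 1-\eps_0]$ keeps both the numerator and the denominator bounded away from $0$. No further input is needed.
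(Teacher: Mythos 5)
Your proposal is correct and follows essentially the same route as the paper: a uniform sub-Gaussian bound on the moment generating functions of the centred geometric increments, a Chernoff/Markov bound for each tail, and the choice $\theta \asymp \ln n/\sqrt{n}$. The only (welcome) addition is that you spell out explicitly why $\EXP(S_\tau)$ differs from $n/2$ by $O(1)$, which the paper states more tersely.
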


\begin{proof}
 Since the parameters $p_k$ are bounded away from 0 and 1, 
$$\frac{n}{2}<\tau<\frac{n}{2}+O(1).$$
 It therefore suffices to show that for each $N$
$$
\Prob \big( \big| S_N - E S_N \big|> \sqrt{N}\ln N \big) = O(\exp(-c(\ln N)^2)).
$$
To this end let $\psi_k(t)$ denote the moment generating function of $\ell_k-\EXP(\ell_k)$
and 
$\DS \Psi_n(t)=\prod_{k=1}^n \psi_k(t)$ be the moment generating function
of $S_n-\EXP(S_n).$ Then similarly to the proof of \eqref{CharGeom1}
we obtain that for $|t|<\delta$ where $\delta$ is sufficiently small,
we have 
$\psi_k(t)\leq e^{\kappa t^2}$  and
so $\Psi_N(t)\leq e^{N \kappa t^2}. $ Thus for any such 
 $0<t<\delta$
$$ \Prob \big( S_N - E S_N > \sqrt{N}\ln N \big) \leq 
\exp\left[\kappa N t^2-t\sqrt{N} \ln N\right]. $$
Choosing $t=\ln N/(2\kappa \sqrt{N})$ 
get that 
$\DS \Prob \big( S_N -E S_N > \sqrt{N}\ln N \big) =O(\exp(-c(\ln N)^2)).$
The estimate $\DS \Prob \big( S_N -E S_N <- \sqrt{N}\ln N \big) =O(\exp(-c(\ln N)^2))$
is similar, using \\
$t=-\ln N/(2\kappa \sqrt{N}).$ 
\end{proof}

\bibliographystyle{plain}
\bibliography{Bibliography}

\end{document}